\documentclass[sn-mathphys,Numbered]{sn-jnl}

\usepackage{graphicx}
\usepackage{multirow}
\usepackage{amsmath,amssymb,amsfonts}
\usepackage{amsthm}
\usepackage{mathrsfs}
\usepackage[title]{appendix}
\usepackage[table]{xcolor}
\usepackage{textcomp}
\usepackage{manyfoot}
\usepackage{booktabs}
\usepackage{algorithm}
\usepackage{algorithmicx}
\usepackage{algpseudocode}
\usepackage{listings}

\usepackage{microtype}
\usepackage[caption=false]{subfig}
\usepackage{url}
\usepackage[shortlabels]{enumitem}
\usepackage[export]{adjustbox}
\usepackage{bm}
\usepackage[utf8]{inputenc} 
\usepackage[T1]{fontenc}    
\graphicspath{{./}{fig/}}

\theoremstyle{thmstyleone}
\newtheorem{theorem}{Theorem}

\newtheorem{lemma}[theorem]{Lemma}
\newtheorem{corollary}[theorem]{Corollary}

\theoremstyle{thmstyletwo}

\newtheorem{remark}{Remark}

\theoremstyle{thmstylethree}
\newtheorem{definition}{Definition}
\newtheorem{assumption}{Assumption}

\def\unif{\mathrm{Unif}}

\def\sign{\mathrm{sign}}
\DeclareMathOperator*{\argmin}{arg\,min}

\raggedbottom

\begin{document}

\title[Curvature-Aware Derivative-Free Optimization]{Curvature-Aware Derivative-Free Optimization}

\author[1]{\fnm{Bumsu} \sur{Kim}}\email{bumsu@math.ucla.edu}
\equalcont{These authors contributed equally to this work.}

\author[2]{\fnm{Daniel} \sur{McKenzie}}\email{dmckenzie@mines.edu}
\equalcont{These authors contributed equally to this work.}

\author[3]{\fnm{HanQin} \sur{Cai}}\email{hqcai@ucf.edu}
\equalcont{These authors contributed equally to this work.}

\author*[4]{\fnm{Wotao} \sur{Yin}}\email{wotao.yin@alibaba-inc.com}
\equalcont{These authors contributed equally to this work.}

\affil[1]{\orgdiv{Department of Mathematics}, \orgname{University of California, Los Angeles}, \orgaddress{ \city{Los Angeles}, \postcode{90095}, \state{CA}, \country{USA}}}

\affil[2]{\orgdiv{Department of Applied Mathematics and Statistics}, \orgname{Colorado School of Mines}, \orgaddress{\city{Golden}, \postcode{80401}, \state{CO}, \country{USA}}}

\affil[3]{\orgdiv{Department of Statistics and Data Science and Department of Computer Science}, \orgname{University of Central Florida}, \orgaddress{\city{Orlando}, \postcode{32816}, \state{FL}, \country{USA}}}

\affil*[4]{\orgdiv{Decision Intelligence Lab, DAMO Academy}, \orgname{Alibaba US}, \orgaddress{\city{Bellevue}, \postcode{98004}, \state{WA},  \country{USA}}}

\abstract{The paper discusses derivative-free optimization (DFO), which involves minimizing a function without access to gradients or directional derivatives, only function evaluations. Classical DFO methods, which mimic gradient-based methods, such as Nelder-Mead and direct search have limited scalability for high-dimensional problems. Zeroth-order methods have been gaining popularity due to the demands of large-scale machine learning applications, and the paper focuses on the selection of the step size $\alpha_k$ in these methods. The proposed approach, called Curvature-Aware Random Search (CARS), uses first- and second-order finite difference approximations to compute a candidate $\alpha_{+}$.
We prove that for strongly convex objective functions, CARS converges linearly provided that the search direction is drawn from a distribution satisfying very mild conditions. We also present a \textbf{C}ubic \textbf{R}egularized variant of CARS, named CARS-CR, which converges in a rate of $\mathcal{O}(k^{-1})$ 
without the assumption of strong convexity.
Numerical experiments show that CARS and CARS-CR match or exceed the state-of-the-arts on benchmark problem sets.}

\keywords{Derivative-free optimization, Zeroth-order optimization, Curvature-aware method, Hessian-aware method, Newton-type method}

\pacs[MSC Classification]{49M15, 65K05, 68Q25, 90C56}

\maketitle

\section{Introduction}
We consider minimizing a function $f: \mathbb{R}^{d}\to \mathbb{R}$, with only access to function evaluations $f(x)$, and no access to gradients or directional derivatives. This setting is commonly referred to as derivative-free optimization (DFO). DFO has a rich history and has recently gained popularity in various areas such as reinforcement learning \cite{salimans2017evolution,mania2018simple,choromanski2020provably}, hyperparameter tuning \cite{bergstra2012random}, and adversarial attacks on neural network classifiers \cite{chen2017zoo,cai2020zeroth}. In all of these applications, evaluating $f(x)$ is either expensive, time-consuming, or inconvenient, and therefore, it is desirable for DFO algorithms to minimize the number of function evaluations required.

Classical methods for DFO include the Nelder-Mead simplex method \cite{nelder1965simplex}, direct search methods \cite{kolda2003optimization}, and model-based methods \cite{conn2009introduction}. However, these methods tend to scale poorly with the problem dimension $d$, although recent works \cite{cartis2022scalable,cartis2022global,cartis2022dimensionality} have made progress in this direction. Due to the demands of large-scale machine learning applications, \textit{zeroth-order} (ZO) methods for DFO have gained increasing attention \cite{liu2020primer}. ZO methods mimic first-order methods like gradient descent but approximate all derivative information using function queries. At each iteration, the algorithm selects a direction $u_k$ and takes a step $x_{k+1} = x_k + \alpha_k u_k$. While the selection of $u_k$ has been well studied (see \cite{berahas2021theoretical} and references therein), this paper focuses on the selection of $\alpha_k$, allowing for $u_k$ to be either randomly selected or an approximation to the negative gradient (i.e., $u_k \approx -\nabla f(x_k)$).

Intelligently choosing $\alpha_k$ can lead to convergence in fewer iterations, but this gain may be offset by the number of queries it takes. If we compute $u_k \approx -\nabla f(x_k)$, techniques such as backtracking line search from first-order optimization can be employed \cite{berahas2021global}. However, obtaining a sufficiently accurate approximation to $-\nabla f(x_k)$ requires $\Omega(d)$ queries per iteration \cite{berahas2021theoretical}, which is impractical for large $d$. On the other hand, when we take $u_k$ as a random vector, with high probability $u_k$ is almost orthogonal to $-\nabla f(x_k)$. Hence, $\alpha_k$ in \cite{ghadimi2013stochastic,nesterov2017random,bergou2020stochastic} is very small to guarantee descent at every iteration (possibly in expectation). Our approach differs from these methods.

We propose using finite difference approximations to the first and second derivatives of the univariate function $\alpha \mapsto f(x_k +\alpha u_k)$ to compute a candidate $\alpha_{+}$ for $\alpha_k$. Specifically, we set 
\begin{align*}
\alpha_{+} & = \frac{d_r}{\hat{L}h_r},
\end{align*}
where
\begin{align*}
\quad d_{r} &:= \frac{f(x_k+r u_k) - f(x_k-r u_k)}{2r}, \\
h_{r} &:= \frac{f(x_k+r u_k) - 2f(x_k) + f(x_k-r u_k)}{r^2},
\end{align*}
and $\hat{L}$ is a user-specified parameter. Computing $\alpha_+$ requires only three queries per iteration. This simple modification to the well-known Random Search algorithm \cite{ghadimi2013stochastic,nesterov2017random} (which takes $\alpha_k = d_r/\hat{L}$ or similar) can be viewed as an inexact one-dimensional Newton's method at each iteration. When encountering low curvature directions, $h_r$ is small and $\alpha_{+}$ is large, so this $\alpha_{+}$ may occasionally fail to guarantee descent. To remedy this, we combine our step-size rule with a simple safeguarding scheme based on the recently introduced Stochastic Three Point method \cite{bergou2020stochastic}, which guarantees $f(x_{k+1}) \leq f(x_k)$ at every iterate. Importantly, we show that $\alpha_{+}$ is a good candidate, i.e., $f(x_{k+1})$ is significantly smaller than $f(x_k)$ a positive proportion of the time. From this, we can quantify the expected total number of function queries required to reach a target solution accuracy. Because our method is a natural extension of Random Search that incorporates second derivative information, we dub it Curvature-Aware Random Search, or CARS.

In addition to CARS, we propose an extension called CARS-CR (CARS with Cubic Regularization) that modifies the stochastic subspace cubic Newton method \cite{pmlr-v119-hanzely20a} into a zeroth-order method. CARS-CR is essentially CARS with an adaptive parameter $\hat{L}$ and achieves $\mathcal{O}(k^{-1})$ convergence for convex functions.

Our numerical experiments show that both CARS and CARS-CR outperform state-of-the-art algorithms on benchmarks across various problem dimensions, demonstrating efficiency and robustness. Furthermore, our results on adversarial attacks show that CARS can be adapted to different sample distributions of $u_k$. We demonstrate that CARS performs well with a tailored distribution for a particular problem, an adversarial attack on a pre-trained neural network.

\vspace{0.1in}
\noindent\textit{\textbf{Organization.}}\quad
This paper is laid out as follows. In the rest of this section, we fix the notation and discuss prior art. In Section~\ref{sec:CARS}, we introduce the main algorithm, namely Curvature-Aware Random Search (CARS), along with its convergence analysis. 
Section~\ref{section:CARS_CR} extends CARS with Cubic Regularization (CARS-CR) for general convex functions. In Section~\ref{sec: proofs of main results}, we provide mathematical proofs to support our technical claims. Section~\ref{sec:ExperimentalResults} contains extensive numerical experiments that empirically verify our technical claims. Section~\ref{sec:conclusion} concludes the paper.

\subsection{Assumptions and Notation}
\label{sec:Notation} 
In developing and analyzing CARS, we assume that $f$ is a convex and twice continuously differentiable function. We use $g(x) = \nabla f(x)$ and $H(x) = \nabla^2 f(x)$ briefly in the theoretical analysis of Section~\ref{section:convergence of CARS}. For a fixed initial point $x_0$, we define the level-set $\mathcal{Q} = \{x\in\mathbb{R}^d : f(x)\leq f(x_0)\}$, $\|\cdot\|$ as the Euclidean norm, and $f_{\star} := \min_{x\in\mathbb{R}^{d}}f(x)$. We say $x_k$ is an $\varepsilon$-optimal solution if $f(x_k) - f_{\star} \leq \varepsilon$. We use $\mathcal{D}$ to denote a probability distribution on $\mathbb{R}^{d}$. For any measurable set $S \subseteq \mathbb{R}^{d}$ with finite measure, $\mathrm{Unif}(S)$ denotes the uniform distribution over $S$. The unit sphere is written as $\mathbb{S}^{d-1}:= \{u: \|u\| = 1\}\subseteq \mathbb{R}^{d}$, and $e_1,\cdots, e_d$ represent the canonical basis vectors in $\mathbb{R}^{d}$. For two matrices $A$ and $B$, we write $A\preceq B$ if $B-A$ is positive semi-definite.

\begin{definition}
\label{Definition: smoothness}
We say $f$ is $L$-smooth, $L>0$, if $H(x) \preceq L I_d$ for all $x \in \mathcal{Q}$.
\end{definition}

\begin{definition}
\label{Definition: strong convexity}
We say $f$ is $\mu$-strongly convex, $\mu>0$, if $\mu I_d \preceq H(x)$ for all $x \in \mathcal{Q}$. 
\end{definition}

Under strong convexity, $H(z)$ is positive definite for all $ z\in \mathcal{Q}$; hence the following inner product and induced norm are well-defined for all $z \in \mathcal{Q}$:
\begin{align*}
  \langle x, y \rangle _{H(z)} &:= \langle H(z)x, y \rangle  \qquad\textnormal{and}\qquad 
  \|x\|_{H(z)}^2 := \langle x, x\rangle_{H(z)}.  
\end{align*}

Strong convexity also implies the following \cite[Proposition~2]{gower2019rsn}.
\begin{lemma}[$\hat{L}$-Relative Smoothness and $\hat{\mu}$-Relative Convexity]
\label{lemma: Smoothness and convexity}
If $f$ is $\mu$-strongly convex, then $f$ is $\hat{\mu}$-relatively convex and $\hat{L}$-relatively smooth for some $\hat{L} \geq \hat{\mu}>0$, \textit{i.e.} for all $x, y \in \mathcal{Q} $
\begin{equation*}
   \frac{\hat{\mu}}{2}\|x-y\|_{H(y)}^2 \leq  f(x) - f(y) - \langle g(y), x-y \rangle \leq \frac{\hat{L}}{2}\|x-y\|_{H(y)}^2.
\end{equation*}
\end{lemma}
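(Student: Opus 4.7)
The natural starting point is the second-order integral Taylor identity
\begin{equation*}
f(x) - f(y) - \langle g(y), x-y\rangle \;=\; \int_0^1 (1-t)\,\|x-y\|_{H(y+t(x-y))}^2\, dt,
\end{equation*}
which follows by applying the fundamental theorem of calculus twice (once to write $f(x)-f(y)=\int_0^1\langle g(y+t(x-y)),x-y\rangle\,dt$ and once to expand $g$) and then swapping the order of integration. Given this identity, the whole lemma reduces to bounding the intermediate Hessians $H(y+t(x-y))$, for $t\in[0,1]$, from above and below by multiples of $H(y)$.

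To justify that the entire segment lies inside the set where $H$ is controlled, I would first note that $\mathcal{Q}$ is convex, being a sub-level set of the convex function $f$; hence $y+t(x-y)\in\mathcal{Q}$ for all $t\in[0,1]$ whenever $x,y\in\mathcal{Q}$. Strong convexity also forces $\mathcal{Q}$ to be bounded (and it is closed by continuity of $f$), so $\mathcal{Q}$ is compact, and continuity of $H$ then yields a uniform upper bound $L<\infty$ with $H(z)\preceq L\,I_d$ for every $z\in\mathcal{Q}$ — this is exactly Definition \ref{Definition: smoothness} with the constant obtained a posteriori. Combining $\mu I_d\preceq H(y)$ with $H(z)\preceq L I_d$ yields the sandwich
\begin{equation*}
\frac{\mu}{L}\,H(y) \;\preceq\; H(z) \;\preceq\; \frac{L}{\mu}\,H(y) \qquad\text{for all } y,z\in\mathcal{Q},
\end{equation*}
which translates pointwise into $\frac{\mu}{L}\|x-y\|_{H(y)}^2 \leq \|x-y\|_{H(z)}^2 \leq \frac{L}{\mu}\|x-y\|_{H(y)}^2$.

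Substituting these bounds into the Taylor identity and pulling them out of the integral, then using $\int_0^1(1-t)\,dt=\tfrac12$, gives the two inequalities of the lemma with $\hat{\mu}:=\mu/L$ and $\hat{L}:=L/\mu$, both strictly positive and satisfying $\hat{L}\geq\hat{\mu}$ since $L\geq\mu$.

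The one place requiring care — and what I would regard as the main obstacle — is obtaining the upper bound $H(z)\preceq L I_d$ purely from the stated hypothesis of $\mu$-strong convexity; one either appeals to a separately assumed $L$-smoothness (as in \cite{gower2019rsn}) or derives $L$ from compactness of $\mathcal{Q}$ and continuity of $H$ as sketched above. Once that bound is in hand, everything else is a routine integral estimate, and the quantitative dependence $\hat{L}/\hat{\mu}=(L/\mu)^2$ is in fact the familiar squared condition number that appears in the convergence rates later in the paper.
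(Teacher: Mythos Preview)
The paper does not supply its own proof of this lemma; it simply cites \cite[Proposition~2]{gower2019rsn}. Your argument is correct and is essentially the standard one: the integral Taylor remainder together with the sandwich $\frac{\mu}{L}H(y)\preceq H(z)\preceq\frac{L}{\mu}H(y)$ on $\mathcal{Q}$ yields the claim with $\hat{\mu}=\mu/L$ and $\hat{L}=L/\mu$. Your observation that $L$-smoothness on $\mathcal{Q}$ can be extracted from strong convexity alone (via compactness of $\mathcal{Q}$ and continuity of $H$) is a nice touch that makes the proof self-contained under the paper's standing hypotheses, whereas the cited reference simply assumes both $\mu$-strong convexity and $L$-smoothness.
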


We also make the following regularity assumption on $H$.
\begin{assumption}\label{assumption: Holder continuity of Hessian}
$H$ is $a$-H\"{o}lder continuous for some $a>0$, \textit{i.e.}
\begin{equation}\label{assumption: eq: Holder continuity of H}
    |{u}^{\top} (H(x) - H(y))\, {u}| \leq L_a \|x-y\|^{a}
\end{equation}
for any unit vector ${u}\in \mathbb{S}^{d-1}$ and $x, y \in \mathcal{Q}$.
\end{assumption}
H\"{o}lder continuity reduces to Lipschitz continuity when $a=1$. Assumption~\ref{assumption: Holder continuity of Hessian} can be used to refine the relative smoothness and relative convexity constants in a smaller region. 

\subsection{Prior Art}
\label{sec:PriorArt}

For a comprehensive introduction to DFO we refer the reader to \cite{conn2009introduction} or the more recent survey article \cite{larson2019derivative}. As mentioned above, our interest is in ZO approaches to DFO \cite{liu2020primer}, as these have low per-iteration query complexity (with respect to the dimension of the problem) and have been successfully used in modern machine learning applications, such as adversarial attacks on neural networks \cite{chen2017zoo,liu2018zeroth, cheng2019sign, cai2020zeroth,cai2021zeroth} and reinforcement learning \cite{salimans2017evolution,choromanski2018structured,fazel2018global}. Of particular relevance to this work is ZO algorithms based on {\em line search}:
\begin{subequations}
\begin{align}
    & \mathrm{Sample}\ u_k \ \mathrm{from}\  \mathcal{D}, \nonumber \\
    & \alpha_k \approx \alpha_{\star} = \argmin_{\alpha \in \mathbb{R}}f(x_k + \alpha u_k), \label{eq:line_search}\\
    & x_{k+1} = x_{k} + \alpha_k u_k, \label{eq:generic_step}
\end{align}\label{eq:GenericDFOLS}
\end{subequations}
\hspace{-0.05in}which may be thought of as zeroth-order analogues of  coordinate descent \cite{nesterov2012efficiency}. All of the complexity results discussed below assume {\em noise-free} access to $f(x)$. The noisy case is more complicated, see \cite{jamieson2012query}. 
The first papers to use this scheme were \cite{karmanov1974convergence,karmanov1975convergence}, where convergence is discussed under the assumptions that $u_k$ is a descent direction\footnote{$u_{k}^{\top}\nabla f(x_k) < 0$.} for all $k$ and \eqref{eq:line_search} is solved sufficiently accurately. Assuming \eqref{eq:line_search} is solved {\em exactly}, \cite{mutsenieks1964extremal} proves this scheme finds an $\varepsilon$-optimal solution in $\mathcal{O}(\log(1/\varepsilon))$ iterations when $f$ is a quadratic function (see also \cite{schrack1976optimized} for a discussion of these results in English). In \cite{krutikov1983rate},  $\mathcal{O}(\log(1/\varepsilon))$ iteration complexity was proved assuming access to an approximate line search oracle 
that solves \eqref{eq:line_search} sufficiently accurately, for any strongly convex $f$, as long as $u_k$ are cyclically sampled coordinate vectors.  Similar ideas can be found in \cite{grippo1988global, grippo2007nonmonotone,grippo2015class}. More recently, \cite{stich2013optimization} studied \eqref{eq:GenericDFOLS} under the name {\em Random Pursuit} which assumes access to an approximate line search oracle satisfying either additive ($\alpha_{\star} - \delta \leq \tilde{\alpha} \leq \alpha_{\star} + \delta$) or multiplicative ($(1-\delta)\alpha_{\star} \leq \tilde{\alpha} \leq \alpha_{\star}$ and $\sign(\tilde{\alpha}) = \sign(\alpha_{\star})$) error bounds. They show Random Pursuit finds an $\varepsilon$-optimal solution in $\mathcal{O}(\log(1/\varepsilon))$ (resp. $\mathcal{O}(1/\varepsilon )$) iterations if $f$ is strongly convex (resp. convex). The use of $\mathcal{O}(\cdot)$ above suppresses the dependence of the query complexity on the dimension $d$. In all results stated, the query complexity scales at least linearly with $d$. This is unavoidable in DFO for generic $f$; see \cite{wang2018stochastic, balasubramanian2018zeroth,cai2020zeroth,cai2020scobo,cai2021zeroth,cartis2021scalable} for recent progress in overcoming this. 

We highlight a shortcoming of the aforementioned works: Although they provide essentially optimal bounds on the {\em iteration} complexity, they do not bound the {\em query} complexity. Indeed, the true query complexity will depend on the inner workings of the solver employed to solve \eqref{eq:line_search}. For example, \cite{stich2013optimization} reports each call to the line search oracle requires an average of $4$ function queries when $d \leq 128$ which increases to $7$ when $d = 1024$.  In contrast, CARS requires {\em only three queries} per iteration, independent of $d$. The recently introduced Stochastic Three Point (STP) method  \cite{bergou2020stochastic,bibi2020stochastic} also uses only three queries per iteration. However, STP is not scale invariant, and in practice we find its performance compares poorly against CARS (see Section~\ref{sec:ExperimentalResults}).

\begin{table}[t]
\def\ROWCOLOR{black!10!white}
    \begin{tabular}{l c c c c}
    \toprule
       Algorithm & Strg. Convex & Convex & Queries/Iter &  Line Search Oracle \\
    \midrule
    \rowcolor{\ROWCOLOR}
    \cite{karmanov1975convergence} & $\mathcal{O}(\log(1/\varepsilon))$ & $\mathcal{O}(1/\varepsilon)$ & --- & Yes \\
    \cite{krutikov1983rate} & $\mathcal{O}(\log(1/\varepsilon))$ & --- & --- & Yes \\
    \rowcolor{\ROWCOLOR}
    NDFLS \cite{grippo2015class} & --- & --- & $< \infty$ & No\\
    Random Pursuit \cite{stich2013optimization} & $\mathcal{O}(\log(1/\varepsilon))$ & $\mathcal{O}(1/\varepsilon)$ & 4--7\,(empirical) & Yes \\
    \rowcolor{\ROWCOLOR}
    ZOO-Newton \cite{chen2017zoo} & --- & --- & $3$ & No \\
    Stochastic 3 Points \cite{bergou2020stochastic} & $\mathcal{O}(\log(1/\varepsilon))$ & $\mathcal{O}(1/\varepsilon)$ & 3 & No \\
    \rowcolor{\ROWCOLOR}
    {\bf CARS (proposed)} & $\mathcal{O}(\log(1/\varepsilon))$ & $\mathcal{O}(1/\varepsilon)^{\dagger}$ & $3$ or $4^{\dagger}$ & No \\ 
    \bottomrule
    \end{tabular}
    \caption{Comparison of various line-search based ZO algorithms, all of which use random search directions.  We refer to algorithms without an agreed-upon name by the paper in which it first appeared. If a quantity ({\em e.g.} queries per iteration, convergence rate) is not explicitly computed we denote this with ``---''. Notes: $^{\dagger}$: refers to CARS-CR variant. }
    \label{table: ZO_query_comparison}
\end{table}

We are partially motivated by ZOO-Newton \cite{chen2017zoo}, which is essentially CARS with $\mathcal{D} = \mathrm{Unif}(\{e_1,\cdots, e_d\})$. In \cite{chen2017zoo}, it is demonstrated empirically that ZOO-Newton performs well but no theoretical guarantees are provided. Our convergence guarantees for CARS imply convergence of ZOO-Newton as a special case. Many other works consider adapting Newton's method to the derivative-free setting. However, obtaining an estimate of the $d\times d$ Hessian $\nabla^{2}f(x_k)$ for general ({\em i.e.} unstructured) $f(x)$ is difficult. Thus, one needs to either use $\Omega(d^2)$ queries \cite{fabian1971stochastic} in order to obtain an accurate estimate of $\nabla^2f(x_k)$---far too many for most applications---or use a high-variance approximation to $\nabla^{2}f(x_k)$ \cite{spall2000adaptive,ye2018hessian,glasmachers2020hessian,zhu2019efficient,zhu2020hessian}. CARS sidesteps this dichotomy, as it applies Newton's method to a one-dimensional function. Thus the ``Hessian'' to be estimated is $1 \times 1$. 

\subsection{Main Contributions}
We propose a simple and lightweight zeroth-order algorithm: CARS. To derive convergence rates for CARS we use a novel convergence analysis that hinges on the insight that CARS need only significantly decrease the objective function on a positive proportion of the iterates. Our results allow for a H\"{o}lder continuous Hessian---a weaker assumption than the Lipschitz continuity typically considered in such settings.  We also propose a cubic-regularized variant, CARS-CR. The analysis of CARS-CR extends that of the Stochastic Subspace Newton method \cite{pmlr-v119-hanzely20a} to the zeroth-order setting. The key ingredient is a careful handling of the errors introduced by replacing directional derivatives with their finite difference counterparts. Our theoretical results are corroborated by rigorous benchmarking on two datasets: Mor\'{e}-Garbow-Hillstrom \cite{more1981testing} and CUTEst \cite{gould2015cutest}, which reveal that CARS outperforms existing line-search based ZOO algorithms.  Our paper is accompanied by an open-source implementation of CARS (and CARS-CR), available online at \url{https://github.com/bumsu-kim/CARS}.

\section{Curvature-Aware Random Search}\label{sec:CARS}
Given $u_k$ sampled from $\mathcal{D}$, consider the one-dimensional Taylor expansion:
\begin{equation} \label{eq: T2 Taylor poly of 2nd order}
    T_{2}(\alpha;x_k,u_k) := f(x_k) + \alpha u_k^{\top} g_k + \frac{\alpha^2}{2} u_k^{\top} H_k u_k \approx f(x_k + \alpha u_k).
\end{equation}
CARS selects $\alpha_k \approx \argmin_{\alpha}T_{2}(\alpha;x_k,u_k)$. The exact minimizer  $u_k^{\top} g_k/u_k^{\top} H_k u_k$ depends on unavailable quantities. CARS uses $\alpha_k = d_{r_k}/h_{r_k}$, where $d_{r}$ and $h_{r}$ are finite difference approximations:
\begin{align}
\begin{split}
d_{r_k}(x_k;u_k) &:= \frac{f(x_k+r_k u_k) - f(x_k-r_k u_k)}{2r_k} 
= u_k^{\top}g_k + \mathcal{O}(r_k^2\|u_k\|^2),
\end{split} \label{eq:Compute_d}\\
\begin{split}
h_{r_k}(x_k;u_k) & := \frac{f(x_k+r_k u_k) - 2f(x_k) + f(x_k-r_k u_k)}{r^2} 
            = u_k^{\top}H_k u_k + \mathcal{O}(r_k^2\|u_k\|^2).
\end{split}\label{eq:Compute_h}
\end{align}
(We write $d_{r_k}$ and $h_{r_k}$, in place of $d_{r_k}(x_k;u_k)$ and $h_{r_k}(x_k;u_k)$ when $x_k$ and $u_k$ are clear from context.) Thus each iteration of CARS is a zeroth-order analogue of a single iteration of Newton's method applied to $f$ restricted to the line spanned by $u_k$. As is well-known \cite{nesterov2006cubic}, pure Newton's method may not converge. So, following \cite{gower2019rsn} we add a fixed step-size $1/\hat{L}$ and define:
\begin{equation}
    x_{\mathrm{CARS, }k} = x_k - \frac{d_{r_k}}{\hat{L}h_{r_k}} u_k.
    \label{eq: Define CARS}
\end{equation}

We allow the distribution $\mathcal{D}$ to be iteration dependent, {\em i.e.} $u_k$ can be sampled from $\mathcal{D}_k$. In computing $d_{r_k}(x_k)$ and $h_{r_k}(x_k)$, CARS queries $f$ at the symmetric points $x_k + r_k u_k$ and $x_k - r_k u_k$. 
We extend STP~\cite{bergou2020stochastic} into a {\em safeguarding mechanism} for CARS and 
choose the next iterate
\begin{equation*}
    x_{k+1} = \argmin \{ f(x_{\mathrm{CARS}, k}), f(x_k), f(x_k - r_k u_k), f(x_k + r_k u_k) \},
\end{equation*}
which ensures monotonicity:
$f(x_0)\geq f(x_1) \geq f(x_2) \geq \cdots $. CARS requires two input parameters, $\hat{L}$ and $C$. Ideally, $\hat{L}$ should be the relative smoothness parameter (see Lemma~\ref{lemma: Smoothness and convexity}), although CARS-CR (see Section~\ref{section:CARS_CR}) introduces a mechanism for selecting $\hat{L}$ adaptively. The selection of $C$ is the subject of the next section.

\begin{algorithm}[H]
 \caption{\textbf{C}urvature-\textbf{A}ware \textbf{R}andom \textbf{S}earch (CARS)}
 \label{alg:CARS}
\begin{algorithmic}[1]
  \State \textbf{Input:} $x_0$: initial point; $\hat{L}$: relative smoothness parameter,
  $C$: scale-free sampling radius limit.
  \State Get the oracle $f(x_0)$.
  \For{$k=0$ to $K$}
        \State Sample $u_k$ from $\mathcal{D}_k$.
        \State Set $r_k \leq C/\|u_k\|$. 
        \State Evaluate and store $f(x_{k} \pm r_k u_k)$.
        \State Compute $d_{r_k}$ and $h_{r_k}$ using \eqref{eq:Compute_d} and \eqref{eq:Compute_h}.
        \State Compute $x_{\textrm{CARS}, k} = x_{k} - \frac{d_{r_k}}{\hat{L}h_{r_k}}u_k$.
        \State $x_{k+1} = \argmin\{f(x_{\textrm{CARS}, k}),f(x_k), f(x_{k} - r_k u_k), f(x_{k} + r_k u_k) \}$.
  \EndFor
   \State \textbf{Output:} $x_K$: estimated optimum point.
\end{algorithmic}
\end{algorithm}

\subsection{Convergence Guarantees}\label{section:convergence of CARS}

Before proceeding we list two necessary assumptions on $\mathcal{D}_k$. To describe the assumptions, 
introduce:
\begin{align}
    \eta( g, H ; \mathcal{D}) = \mathbb{E}_{u \sim \mathcal{D}} \left[ \frac{(u^\top g)^2}{(u^{\top} H u)  (g^{\top}H^{-1}g) } \right]. \label{eq:Def of eta}
\end{align}
By Cauchy-Schwarz $\eta(g,H;\mathcal{D}) \leq 1$ for all $g$, $\mathcal{D}$, and positive definite $H$. 
We use $\eta$ to measure the quality of the sampling distribution $\mathcal{D}$ with respect to the Newton vector $H^{-1}g$, and it is exactly 1 when all $u\sim \mathcal{D}$ are parallel to $H^{-1}g$. Our analysis assumes $\eta(g,H;\mathcal{D})$ is bounded away from zero, and this property holds for common choices of $\mathcal{D}$ as shown in Lemma~\ref{lemma: Lower bound eta}. Since 
replacing $(r_k, \mathcal{D}_k)$ by $(\beta^{-1}r_k, \beta\mathcal{D}_k)$, for any $\beta >0$, will not affect CARS, we use the {\em{scale-free sampling radius}}, $r_k\|u_k\|$, 
and define the following constants depending on the H\"{o}lder continuity of $H$:
\begin{align*}
    C_{1,a} = \left(\frac{(a+1)(a+2)}{2^{1/2+a}L_{a}}\right)^{1/(1+a)} \text{ and }\quad C_{2,a} = \left( \frac{(a+1)(a+2)}{4(\sqrt{2}+1)L_{a}} \right)^{1/a}.
\end{align*}
Our analysis requires us to define the following sampling radius limit, $C$, which also depends on the target accuracy $\varepsilon$ and a free parameter $\gamma \in (0,1]$:
\begin{align}\label{eq: Scale-Free Sampling Radius Limit}
    C := \min \{ C_{1,a} (\gamma \sqrt{2\mu \varepsilon})^{1/(1+a)}, C_{2,a} \mu^{1/a} \}.
\end{align}
CARS uses $C$ to choose the sampling radius $r_k\|u_k\|$ {\em after} sampling $u_k$ (see Line 6 of Algorithm~\ref{alg:CARS}). For instance, when $H$ is Lipschitz continuous, this rule gives $r_k \|u_k\| = \mathcal{O}(\varepsilon^{1/4})$. Note that $C$ is scale-invariant, {\em i.e.} replacing $(f, \varepsilon)$ by $(\lambda f, \lambda\varepsilon)$ for any $\lambda>0$ does not change $C$.

\begin{theorem}[Expected descent of CARS]
\label{thm: linear descent of CARS at each step}
Suppose $f$ is $\mu$-strongly convex and its Hessian, $H$, is $a$-H\"{o}lder continuous. Suppose further that $\eta(g_k, H_k; \mathcal{D}_k) \geq \eta_{0} > 0$. Let $\gamma \in (0,1]$ and $\varepsilon$ be the target accuracy. Take the scale-free limit of sampling radius $C$ in  \eqref{eq: Scale-Free Sampling Radius Limit}. Let $x_{\mathrm{CARS}, k}$ be as in \eqref{eq: Define CARS} and let $\mathcal{A}_k$ denote the event: 
\begin{equation} \label{eq: condition on u}
    \gamma \|u_k\| \sqrt{2\mu\varepsilon} \leq |{u}_k^{\top}g_k|.
\end{equation}
Then, 
\begin{equation}
    \mathbb{E}\left[f(x_{\mathrm{CARS}, k}) - f_\star \mid \mathcal{A}_k \right] \leq \left(1 - \eta_{0}\frac{\hat{\mu}}{2\hat{L}}\right)(f(x_k) - f_\star).
    \label{eq: CARS Descent}
\end{equation}
\end{theorem}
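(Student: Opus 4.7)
The plan is to view each CARS step as an inexact damped 1D Newton step applied to the restriction of $f$ to the line through $x_k$ along $u_k$, then convert the pointwise bound into the claimed expected rate via $\eta_0$ and relative strong convexity. I will proceed in three stages.

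\emph{Stage 1 (Ideal descent from relative smoothness).} Applying Lemma~\ref{lemma: Smoothness and convexity} with $y=x_k$ and $x=x_k-\alpha u_k$ yields the 1D upper model
\begin{equation*}
   f(x_k-\alpha u_k)\ \leq\ f(x_k) - \alpha\, u_k^\top g_k + \tfrac{\hat L}{2}\alpha^2\, u_k^\top H_k u_k.
\end{equation*}
If $\alpha$ could be chosen as the exact 1D damped Newton step $\alpha^\star = u_k^\top g_k/(\hat L\, u_k^\top H_k u_k)$, the guaranteed descent would be $(u_k^\top g_k)^2/(2\hat L\, u_k^\top H_k u_k)$. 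Substituting instead $\alpha=d_{r_k}/(\hat L h_{r_k})$ gives a bound that deviates from this ideal only through the two multiplicative factors $d_{r_k}/(u_k^\top g_k)$ and $h_{r_k}/(u_k^\top H_k u_k)$.

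\emph{Stage 2 (Approximation error control via H\"older continuity).} I would Taylor-expand $f(x_k\pm r_k u_k)$ to second order and use Assumption~\ref{assumption: Holder continuity of Hessian} to bound the remainders, giving
\begin{equation*}
   |d_{r_k}-u_k^\top g_k|\ \lesssim\ \tfrac{L_a\, r_k^{1+a}\|u_k\|^{2+a}}{(a+1)(a+2)},\qquad |h_{r_k}-u_k^\top H_k u_k|\ \lesssim\ \tfrac{L_a\, r_k^{a}\|u_k\|^{2+a}}{(a+1)(a+2)}.
\end{equation*}
The scale-free radius limit $C$ in \eqref{eq: Scale-Free Sampling Radius Limit}, together with the event $\mathcal{A}_k$ (which ensures $|u_k^\top g_k| \geq \gamma\|u_k\|\sqrt{2\mu\varepsilon}$) and the strong-convexity lower bound $u_k^\top H_k u_k \geq \mu\|u_k\|^2$, are designed precisely so that $d_{r_k}/(u_k^\top g_k)$ and $h_{r_k}/(u_k^\top H_k u_k)$ lie in an interval tight enough to retain at least half the ideal descent. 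The resulting pointwise bound on $\mathcal{A}_k$ is
\begin{equation*}
   f(x_{\mathrm{CARS},k})\ \leq\ f(x_k) - \tfrac{1}{4\hat L}\cdot\tfrac{(u_k^\top g_k)^2}{u_k^\top H_k u_k}.
\end{equation*}

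\emph{Stage 3 (Expectation and strong convexity).} Taking conditional expectation over $u_k\sim\mathcal{D}_k$ given $\mathcal{A}_k$ and invoking the definition \eqref{eq:Def of eta} of $\eta(g_k,H_k;\mathcal{D}_k)$ and its lower bound $\eta_0$, I obtain
\begin{equation*}
   \mathbb{E}[f(x_k)-f(x_{\mathrm{CARS},k})\mid \mathcal{A}_k]\ \geq\ \tfrac{\eta_0}{4\hat L}\, g_k^\top H_k^{-1} g_k.
\end{equation*}
Finally, minimizing the relative-strong-convexity lower bound $f(x)\geq f(x_k)+g_k^\top(x-x_k)+\tfrac{\hat\mu}{2}\|x-x_k\|^2_{H_k}$ over $x$ (the minimizer is $x_k-\hat\mu^{-1}H_k^{-1}g_k$) yields $g_k^\top H_k^{-1} g_k\geq 2\hat\mu(f(x_k)-f_\star)$, which combines with the previous display to give exactly the rate $1-\eta_0\hat\mu/(2\hat L)$.

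\emph{Main obstacle.} Stage 2 is the delicate part: the factor-of-two loss against the idealized descent must emerge cleanly from the explicit constants $C_{1,a}$, $C_{2,a}$, and the free parameter $\gamma$, and this requires bookkeeping the interaction of the Hölder remainder, the sampling radius cap, and the $\mathcal{A}_k$ threshold. A secondary subtlety is that $\eta\geq\eta_0$ is a statement about the unconditional distribution $\mathcal{D}_k$; I would verify that restricting to $\mathcal{A}_k$ does not decrease this ratio (intuitively $\mathcal{A}_k$ removes precisely the directions $u_k$ nearly orthogonal to $g_k$, so the conditioning should only help).
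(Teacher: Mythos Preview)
Your three-stage outline is essentially the paper's own argument: relative smoothness furnishes the one-dimensional quadratic upper model, the H\"older remainder bounds on $d_{r_k}$ and $h_{r_k}$ together with the cap $r_k\|u_k\|\leq C$ and the event $\mathcal{A}_k$ force the inexact step to capture at least half of the ideal descent $q(t_\star)=(u_k^\top g_k)^2/(2\hat L\,u_k^\top H_k u_k)$, and then the definition of $\eta$ plus $g_k^\top H_k^{-1}g_k\geq 2\hat\mu(f(x_k)-f_\star)$ yield the rate. The secondary subtlety you flag---that $\eta_0$ bounds the \emph{unconditional} expectation while the theorem conditions on $\mathcal{A}_k$---is genuine and is not addressed in the paper's proof either; your intuition that conditioning on $\mathcal{A}_k$ only discards near-orthogonal directions and hence cannot hurt is the implicit justification.
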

In words, by limiting the sampling radius to $C$, and conditioning on $u_k$ being ``good enough'' (\emph{i.e.} $\mathcal{A}_k$ occurs,) we obtain linear descent in  expectation. The proof of Theorem~\ref{thm: linear descent of CARS at each step} can be found in Section~\ref{sec: proofs of main results}. Although $\mathcal{A}_k$ does not occur with probability $1$, we show $\mathcal{A}_k$ occurs for a positive fraction of CARS iterations. When $\mathcal{A}_k$ does not occur, the safeguarding mechanism (Line 10 of Algorithm~\ref{alg:CARS}) still ensures monotonicity: $f(x_{k+1}) \leq f(x_k)$. This reveals the key idea behind CARS: {\em it exploits good search directions $u_k$ when they arise yet is robust against poor search directions.} Carefully quantifying this intuition, we have:

\begin{corollary}[Convergence of CARS]\label{thm:convergence of cars; strongly cvx}
Take the assumptions of Theorem~\ref{thm: linear descent of CARS at each step}. Suppose further that there exists $\gamma \in (0,1]$ such that
\begin{equation}\label{eq: p_gamma defining inequality}
    p_{\gamma} := \inf_{k\geq 0} \mathbb{P}_{u_k\sim\mathcal{D}_k} \left[ |u_k^{\top}g_k| \geq \gamma\|u_k\|\|g_k\|\right] >0
\end{equation}
for all $k\geq 0$, and use $\gamma$ to define $C$ in \eqref{eq: Scale-Free Sampling Radius Limit}. 
Then, Algorithm~\ref{alg:CARS} converges linearly. More specifically,
for any 
\begin{align*}
    K \geq \frac{2\hat{L}}{\eta_0 p_{\gamma}\hat{\mu}} \log\left(\frac{f(x_0) - f_\star}{\varepsilon} \right),
\end{align*}
we have $\mathbb{E}[f(x_K)] - f_\star \leq \varepsilon$.
\end{corollary}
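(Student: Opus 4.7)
The plan is to combine Theorem~\ref{thm: linear descent of CARS at each step} (conditional linear descent under $\mathcal{A}_k$) with the probability lower bound \eqref{eq: p_gamma defining inequality} and the safeguarding rule (Line~9 of Algorithm~\ref{alg:CARS}) that guarantees $f(x_{k+1}) \leq \min\{f(x_k),f(x_{\mathrm{CARS},k})\}$. The goal is to upgrade the \emph{conditional} contraction into an \emph{unconditional} one, and then to iterate.

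The first step is to replace $\|g_k\|$ in \eqref{eq: p_gamma defining inequality} by $\sqrt{2\mu\varepsilon}$, so that the event in \eqref{eq: p_gamma defining inequality} implies the event $\mathcal{A}_k$. This is where strong convexity enters through its Polyak--{\L}ojasiewicz consequence: $\|g_k\|^2 \geq 2\mu(f(x_k)-f_\star)$. As long as $f(x_k)-f_\star > \varepsilon$ (otherwise there is nothing to prove, since monotonicity of $\{f(x_k)\}$ then preserves $\varepsilon$-optimality forever), we have $\|g_k\|\geq\sqrt{2\mu\varepsilon}$, and thus
\begin{equation*}
\{|u_k^\top g_k|\geq\gamma\|u_k\|\|g_k\|\}\;\subseteq\;\{|u_k^\top g_k|\geq\gamma\|u_k\|\sqrt{2\mu\varepsilon}\}=\mathcal{A}_k,
\end{equation*}
so conditionally on the history $\mathcal{F}_{k-1}$ (and on $f(x_k)>f_\star+\varepsilon$) we get $\mathbb{P}[\mathcal{A}_k\mid\mathcal{F}_{k-1}]\geq p_\gamma$.

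The second step is the one-step unconditional descent. Since $x_{\mathrm{CARS},k}$ is among the candidates compared in Line~9, we always have $f(x_{k+1}) \leq f(x_{\mathrm{CARS},k})$, and also $f(x_{k+1}) \leq f(x_k)$. Splitting on $\mathcal{A}_k$ and its complement and applying Theorem~\ref{thm: linear descent of CARS at each step} on $\mathcal{A}_k$ and pure monotonicity on $\mathcal{A}_k^{c}$ yields
\begin{align*}
\mathbb{E}\left[f(x_{k+1})-f_\star\mid\mathcal{F}_{k-1}\right] &\leq \mathbb{P}[\mathcal{A}_k\mid\mathcal{F}_{k-1}]\,\Bigl(1-\eta_0\tfrac{\hat{\mu}}{2\hat{L}}\Bigr)(f(x_k)-f_\star)\\
&\quad +\bigl(1-\mathbb{P}[\mathcal{A}_k\mid\mathcal{F}_{k-1}]\bigr)(f(x_k)-f_\star)\\
&\leq \Bigl(1-p_\gamma\eta_0\tfrac{\hat{\mu}}{2\hat{L}}\Bigr)(f(x_k)-f_\star).
\end{align*}
Taking total expectation gives $\mathbb{E}[f(x_{k+1})-f_\star]\leq\rho\,\mathbb{E}[f(x_k)-f_\star]$ with contraction factor $\rho=1-p_\gamma\eta_0\hat{\mu}/(2\hat{L})\in(0,1)$.

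The final step is routine: iterate to obtain $\mathbb{E}[f(x_K)-f_\star]\leq\rho^{K}(f(x_0)-f_\star)$, bound $\rho^K\leq e^{-K(1-\rho)}$, and solve $e^{-K(1-\rho)}(f(x_0)-f_\star)\leq\varepsilon$ for $K$, which produces the stated lower bound on $K$. The only mild subtlety is the bookkeeping at iterates where $f(x_k)-f_\star\leq\varepsilon$ already: for those the PL-based argument is not needed, and monotonicity of $\{f(x_k)\}$ ensures the target accuracy is retained thereafter, so the worst-case count is indeed the one obtained by assuming we are always in the pre-convergence regime. I do not expect a serious obstacle here; the main care is simply in using the safeguard correctly on $\mathcal{A}_k^{c}$ and in verifying that the sampling-radius choice $C$ from \eqref{eq: Scale-Free Sampling Radius Limit} is the same one required by Theorem~\ref{thm: linear descent of CARS at each step}, which it is by construction.
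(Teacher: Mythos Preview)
Your proposal is correct and follows essentially the same route as the paper: derive the PL inequality $\|g_k\|^2\geq 2\mu(f(x_k)-f_\star)$ from strong convexity, use it together with \eqref{eq: p_gamma defining inequality} to lower-bound $\mathbb{P}[\mathcal{A}_k]$ by $p_\gamma$ whenever $f(x_k)-f_\star>\varepsilon$, split on $\mathcal{A}_k$ versus $\mathcal{A}_k^{c}$ (Theorem~\ref{thm: linear descent of CARS at each step} on the former, safeguard monotonicity on the latter), and then iterate the resulting contraction. Your handling of the $f(x_k)-f_\star\leq\varepsilon$ case via monotonicity matches the paper's conditioning argument.
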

\noindent The additional assumption on $\mathcal{D}_k$ {\em i.e.} the existence of $\gamma$, is very mild, and is discussed in Sec.~\ref{subsec: sampling distribution}.

\subsection{Further Results on the Sampling Distribution}\label{subsec: sampling distribution}
The speed of convergence of CARS depends crucially on the lower bounds $\eta_0$ and $p_{\gamma}$ (see \eqref{eq:Def of eta} and \eqref{eq: p_gamma defining inequality}). The following Lemma computes $\eta_0$ for several commonly used distributions.

\begin{lemma}
\label{lemma: Lower bound eta}
\begin{enumerate}
    \item (Isotropic distributions) When
    \begin{align*}
    \mathcal{D} =  \unif(\mathbb{S}^{d-1}), \textnormal{~~}\unif(\{e_1, \cdots, e_d\}), \textnormal{~~} \mathcal{N}(0, I_d), \textnormal{~~or~~} \unif(\{\pm 1\}^d),
    \end{align*}
    we have $\eta(g,H;\mathcal{D}) \geq {\mu}/({dL})$. The distributions in the above equation are uniform on sphere, coordinate directions, Gaussian, and Rademacher, respectively.
    \item (Approximate gradient direction) If $\mathcal{D}$ satisfies
    \begin{align}\label{eq: D approximates gradient}
        \mathbb{E}_{u\sim \mathcal{D}}\left[ \left(\frac{|u^\top g|}{\|u\|\|g\|}^2\right) \right] \geq \beta >0
    \end{align}
    for some $\beta>0$,
    then $\eta(g,H;\mathcal{D}) \geq {\beta \mu}/{L}$.
    \item (Newton direction) When $u$ is parallel to $H^{-1}g$ with probability 1, we have $\eta(g,H;\mathcal{D}) = 1$. 
\end{enumerate}
\end{lemma}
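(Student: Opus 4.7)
The plan is to unify all three parts through a single sandwich estimate. Using $\mu I_d \preceq H \preceq L I_d$, we have $u^{\top} H u \leq L\|u\|^2$ and $g^{\top} H^{-1} g \leq \|g\|^2/\mu$, so applying these pointwise inside the expectation defining $\eta$ yields
\begin{equation*}
    \eta(g,H;\mathcal{D}) \;\geq\; \frac{\mu}{L}\, \mathbb{E}_{u \sim \mathcal{D}}\!\left[\frac{(u^{\top} g)^2}{\|u\|^2\,\|g\|^2}\right].
\end{equation*}
This reduces parts 1 and 2 to lower-bounding the expected squared cosine between $u$ and $g$.

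For Part 2, the hypothesis \eqref{eq: D approximates gradient} is precisely the statement that this expected cosine-squared is at least $\beta$, so we get $\eta \geq \beta\mu/L$ immediately.

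For Part 1, I would show $\mathbb{E}[(u^{\top} g)^2/(\|u\|^2\|g\|^2)] = 1/d$ for each of the four distributions. Two of them ($\unif(\mathbb{S}^{d-1})$ and $\unif(\{e_1,\dots,e_d\})$) have $\|u\|=1$ deterministically, so it suffices to note $\mathbb{E}[u u^{\top}] = I_d/d$ and apply it to $g$. For $\unif(\{\pm 1\}^d)$, we have $\|u\|^2 = d$ constant, and a componentwise expansion using $\mathbb{E}[u_i u_j] = \delta_{ij}$ gives $\mathbb{E}[(u^{\top} g)^2] = \|g\|^2$. For $\mathcal{N}(0,I_d)$, the radial-angular decomposition $u = \|u\|\tilde{u}$ with $\tilde{u} \sim \unif(\mathbb{S}^{d-1})$ \emph{independent} of $\|u\|$ shows $\mathbb{E}[(u^{\top} g)^2/\|u\|^2] = \mathbb{E}[(\tilde{u}^{\top} g)^2] = \|g\|^2/d$. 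Combining with the sandwich estimate gives $\eta \geq \mu/(dL)$ in every case.

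For Part 3, this is a direct calculation rather than a bound: if $u = c\, H^{-1} g$ with a (possibly random) scalar $c$, then $u^{\top} g = c\,(g^{\top} H^{-1} g)$ and $u^{\top} H u = c^2\,(g^{\top} H^{-1} g)$, so the integrand equals $1$ pointwise, giving $\eta = 1$. The only mildly delicate step is handling the Gaussian case, where $\|u\|$ is random; the polar decomposition plus independence sidesteps this cleanly, so I do not foresee a real obstacle. The whole lemma is essentially a packaging of the Rayleigh quotient bounds together with classical isotropy calculations.
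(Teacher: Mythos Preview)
Your proposal is correct and matches the paper's proof almost line for line: both start from the sandwich estimate $\eta \geq \frac{\mu}{L}\,\mathbb{E}\bigl[(u^\top g)^2/(\|u\|^2\|g\|^2)\bigr]$, then verify the expected squared cosine equals $1/d$ for the four isotropic distributions and handle Parts~2 and~3 by direct substitution. The only cosmetic difference is that the paper dispatches the Gaussian and spherical cases together via rotational symmetry (replacing $g$ by $e_1$) rather than your polar decomposition, but the two arguments are equivalent.
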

\begin{proof}
Since $u^{\top} H u \leq L\|u\|^2$ and $ g^{\top}H^{-1}g \leq \mu^{-1}\|g\|^2 $, 
\begin{align}\label{eq: eta with curvature term and angle btw gradient}
    \eta(g,H;\mathcal{D}) \geq \frac{\mu}{L} \,\, \mathbb{E}_{u\sim \mathcal{D}}\left[ \left(\frac{|{u}^\top {g}|}{\|u\|\|g\|}\right)^2 \right].
\end{align}

\begin{enumerate}
    \item When $\mathcal{D} = \mathcal{N}(0,I_d) $ or $\unif(\mathbb{S}^{d-1})$, we can replace $g$ by the standard basis vector $e_1$ by symmetry, and it immediately follows that $\eta(g,H;\mathcal{D}) \geq \mu/(dL)$.
    When $\mathcal{D} = \unif(\{e_1, \cdots, e_d\})$,
    \begin{align*}
         \mathbb{E}_{u\sim \mathcal{D}}\left[ \left(\frac{|{u}^\top {g}|}{\|u\|\|g\|}\right)^2 \right]
         = \frac{1}{d}\sum_{i=1}^{d}|g_i|^2 / \|g\|^2 = \frac{1}{d}
    \end{align*}
    and when $\mathcal{D} = \unif(\{\pm 1\}^d )$, 
    \begin{align*}
         \mathbb{E}_{u\sim \mathcal{D}}\left[ \left(\frac{|{u}^\top {g}|}{\|u\|\|g\|}\right)^2 \right]
         = \frac{1}{2^d}\sum_{u \in \{\pm 1\}^d} \frac{\sum_{i=1}^{d}|g_i|^2 + \sum_{i\neq j}u_i u_j g_i g_j}{d \|g\|^2}
         = \frac{1}{d}.
    \end{align*}
    Hence,  again from \eqref{eq: eta with curvature term and angle btw gradient}, we have the same lower bound $\mu/(dL)$.
    
    \item When \eqref{eq: D approximates gradient} holds, \eqref{eq: eta with curvature term and angle btw gradient} provides the lower bound $\eta(g,H; \mathcal{D}) \geq \beta \mu / L$.
    In particular, when $u$ is parallel to $g$ ({\em i.e. gradient direction}) with probability $p$, then $\eta \geq p\mu/L$.
    
    \item When $u$ is the Newton direction, {\em i.e.} $u$ is parallel to $H^{-1}g$ with probability 1, $u^{\top}g = u^{\top}Hu = g^{\top}H^{-1}g$, and so $\eta(g,H;\mathcal{D}) = 1$. 
\end{enumerate}
    This finishes the proof. 
\end{proof}

Lemma~\ref{lemma: Lower bound eta} suggests that assuming $\eta(g_k, H_k; \mathcal{D}_k) \geq \eta_{0} > 0$ for all $k\geq 0$ is reasonable in practice. Note Case 3 yields the best possible $\eta$, as $\eta\leq1$ by Cauchy-Schwarz. The next Lemma suggests that assuming $p_{\gamma} > 0$ is also reasonable in practice.

\begin{lemma}[Estimation and Lower Bounds of $p_{\gamma}$ for Various Distributions]\label{lemma: p_gamma lower bounds} ~
\begin{enumerate}
    \item (Uniform on sphere and Gaussian) When $\mathcal{D} = \mathcal{N}(0, I_d)$ or $\unif(\mathbb{S}^{d-1})$ we have
    \begin{align}\label{eq: p_gamma bound for normal and uniform sphere}
        \mathbb{P}_{u\sim\mathcal{D}} \left[ |u^{\top}g| \geq \gamma \|u\|\|g\|\right] = I_{1-\gamma^2}\left(\frac{d-1}{2}, \frac{1}{2} \right). 
    \end{align}
    In particular, for $d\geq 2$, $\mathbb{P}_{u\sim\mathcal{D}} \left[ |u^{\top}g| \geq \|u\|\|g\|/\sqrt{d} \right] \geq 0.315603$. 
    \item (Random coordinate direction) When $\mathcal{D} = \unif(\{e_1, \cdots, e_d\})$ we have 
    $$\mathbb{P}_{u\sim\mathcal{D}} \left[ |u^{\top}g| \geq \|u\|\|g\|/\sqrt{d}\right] \geq 1/d.$$
    
\end{enumerate}
\end{lemma}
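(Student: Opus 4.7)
The plan splits according to the two distributional families in the statement.

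\textbf{Part 1 (Gaussian and uniform on sphere).} The plan is to exploit rotational invariance. Since both $\mathcal{N}(0,I_d)$ and $\unif(\mathbb{S}^{d-1})$ are orthogonally invariant, the law of the ratio $u^{\top}g/(\|u\|\|g\|)$ depends only on $\|g\|$, so we may set $g = \|g\|\,e_1$ and reduce the quantity of interest to $|u_1|/\|u\|$. Moreover, the identity $Z/\|Z\| \sim \unif(\mathbb{S}^{d-1})$ for $Z \sim \mathcal{N}(0,I_d)$ collapses the two cases into one, so it suffices to handle $u \sim \unif(\mathbb{S}^{d-1})$. I would then invoke the classical distributional fact that $u_1^2 \sim \mathrm{Beta}(1/2,(d-1)/2)$, which follows immediately from the Gaussian representation $u_1^2 = X/(X+Y)$ with independent $X \sim \chi^2_1$ and $Y \sim \chi^2_{d-1}$. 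Equation~\eqref{eq: p_gamma bound for normal and uniform sphere} then drops out by writing the tail probability as $1 - I_{\gamma^2}(1/2,(d-1)/2)$ and applying the standard symmetry $1 - I_x(a,b) = I_{1-x}(b,a)$.

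For the explicit numerical bound, specialize to $\gamma = 1/\sqrt{d}$ and write $\phi(d) := I_{1-1/d}((d-1)/2, 1/2)$. Direct computation yields the small-$d$ values $\phi(2) = 1/2$ (via the arcsine primitive) and $\phi(3) = 1 - 1/\sqrt{3} \approx 0.4226$; the asymptotics are also clean, since by the central limit theorem $\sqrt{d}\,u_1$ converges in distribution to $\mathcal{N}(0,1)$, giving $\phi(d)\to\mathbb{P}[|Z|\geq 1] = 2(1-\Phi(1)) \approx 0.3173$. To upgrade this into the uniform lower bound $0.315603$ valid for every $d \geq 2$, I would show monotonicity of $\phi$ in $d$ -- either by a coupling of the Beta variables across dimensions or by a direct recursion on the regularized incomplete beta function -- and combine this with a non-asymptotic comparison with the Gaussian limit to absorb the small gap down to the quoted constant.

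\textbf{Part 2 (Random coordinate direction).} This reduces to a one-line pigeonhole argument. When $u = e_i$, the ratio becomes $|g_i|/\|g\|$. If $|g_i| < \|g\|/\sqrt{d}$ held for every $i$, then $\|g\|^2 = \sum_{i=1}^{d} g_i^2 < d \cdot \|g\|^2/d = \|g\|^2$, a contradiction. Hence at least one index $i^{*}$ satisfies $|g_{i^{*}}| \geq \|g\|/\sqrt{d}$, and because $\mathcal{D}$ selects this index with probability $1/d$, the stated bound follows.

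\textbf{Main obstacle.} The only genuinely delicate step is justifying the precise constant $0.315603$ in Part~1: since $\phi(d)$ lacks a uniform closed form, one must either prove monotonicity in $d$ together with an effective bound on the Gaussian-limit deficit, or patch together explicit evaluations for small $d$ with a quantitative tail estimate for large $d$. Everything else -- rotational invariance, the Beta-distribution identity, and the pigeonhole bound -- is entirely routine.
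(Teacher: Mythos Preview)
Your proposal is correct and follows essentially the same route as the paper: rotational invariance reduces Part~1 to the uniform-on-sphere case and the regularized incomplete beta function, monotonicity in $d$ together with the $d\to\infty$ limit yields the numerical bound, and Part~2 is the identical pigeonhole/max-coordinate argument. Your flagging of the monotonicity step as the only non-routine point is apt (the paper simply asserts it), and your CLT limit $2(1-\Phi(1))\approx 0.3173$ is in fact the correct limiting value, comfortably above the quoted $0.315603$.
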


\begin{proof}
\begin{enumerate}
    \item First note that we can assume $\|u\|=1$ in \eqref{eq: p_gamma bound for normal and uniform sphere}, and thus we only need to consider the case $\mathcal{D} = \unif{(\mathbb{S}^{d-1})}$.
    In this case, $\mathcal{D}$ is invariant under rotation so we can take $g = e_1$ and
    \begin{align*}
        \mathbb{P}_{u\sim\mathcal{D}} \left[ |u^{\top}g| \geq \gamma \|u\|\|g\|\right] & = \mathbb{P}[ |u_1| \geq \gamma ] 
         = I_{1-\gamma^2}\left(\frac{d-1}{2}, \frac{1}{2} \right)
    \end{align*}
    where $I$ is the regularized incomplete Beta function as in \cite[Theorem~2.3]{cai2020scobo}.
    In particular, when $\gamma = 1/\sqrt{d}$, the function $d\mapsto I_{1-1/d}\left(\frac{d-1}{2}, \frac{1}{2} \right)$ is decreasing for $d\geq 2$ and bounded below by 0. Thus $p_{\gamma} \geq \lim_{d\rightarrow \infty} I_{1-1/d}\left(\frac{d-1}{2}, \frac{1}{2} \right) = 0.315603\cdots$.
    
    \item When $\mathcal{D} = \unif{\{e_1,\cdots,e_d\}}$, 
    \begin{align*}
        \mathbb{P}_{u\sim\mathcal{D}} \left[ |u^{\top}g| \geq \gamma \|u\|\|g\|\right]
         = \frac{1}{d} \sum_{i=1}^{d} 1_{|g_i| \geq \gamma\|g\|}.
    \end{align*}
    Recall that $\|g\|^2 = \sum_{i}|g_i|^2$. Hence,  we have $\max_{i} |g_i| \geq \|g\|/\sqrt{d}$, which implies $ \mathbb{P}_{u\sim\mathcal{D}} \left[ |u^{\top}g| \geq \|u\|\|g\|/\sqrt{d}\right] \geq 1/d$. Note that this bound is tight; the equality holds when, for example, $g = (1, 0, \cdots, 0)$. 
\end{enumerate}
\noindent This finishes the proof.
\end{proof}

When $\gamma$ is small enough and $\mathcal{D}_k$ approximates the gradient or Newton direction close enough, both $\eta_{\mathcal{D}_k}$ and $p_\gamma$ do not depend on $d$, leading to dimension independent convergence rates. So, CARS can be combined with other derivative-free techniques that estimate the gradient (or Newton direction)---at the cost of two additional function queries per iteration CARS will choose an approximately optimal step-size in this computed direction. Our analysis easily extends to such combined methods, and we sketch how to do so for the widely used \cite{nesterov2017random,salimans2017evolution,choromanski2018structured,fazel2018global} variance-reduced Nesterov-Spokoiny gradient estimate:
\begin{equation}
    \tilde{g}_k := \frac{1}{m} \sum_{i=1}^m d_r(x_k;u_k)u_k \approx g_k.
\end{equation}
For simplicity, we assume access to exact directional derivatives (as in \cite{nesterov2017random}).

\begin{corollary}
   Let $f$ be $\mu$-strongly convex and $H$ be $a$-H\"{o}lder continuous. Suppose, at each step, $u_k$ is generated by first sampling $v_1,\cdots, v_m$ from Gaussian distribution $\mathcal{N}(0, I_d)$ and defining:
   \begin{align*}
        u_k = \frac{1}{m}\sum_{j=1}^{m} (g_k^{\top}v_j) v_j.
    \end{align*}
    Then CARS (Algorithm~\ref{alg:CARS}) finds $x_K$ with $\mathbb{E}[f(x_K)] - f_\star \leq \varepsilon$ if 
    \begin{align*}
    K \geq \frac{2\hat{L}L(m+d+1)}{\hat{\mu}\mu mp_{\gamma}} \log\left(\frac{f(x_0) - f_\star}{\varepsilon} \right).
\end{align*}
\end{corollary}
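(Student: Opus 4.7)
The strategy is to invoke Corollary~\ref{thm:convergence of cars; strongly cvx}, whose complexity bound $K \geq \tfrac{2\hat L}{\eta_0 p_\gamma \hat\mu}\log\bigl(\tfrac{f(x_0)-f_\star}{\varepsilon}\bigr)$ matches the claim precisely when $\eta_0 = \tfrac{\mu m}{L(m+d+1)}$; the factor $p_\gamma$ carries through unchanged, and the remaining hypotheses (strong convexity, H\"older continuity of $H$, and the sampling radius rule~\eqref{eq: Scale-Free Sampling Radius Limit}) are in force by assumption. Hence the entire proof reduces to a single estimate: lower-bounding $\eta(g_k, H_k; \mathcal{D}_k)$ for the Nesterov--Spokoiny distribution.

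Following the first lines of the proof of Lemma~\ref{lemma: Lower bound eta}, I would apply $u^\top H u \leq L\|u\|^2$ and $g^\top H^{-1} g \leq \|g\|^2/\mu$ to get
\begin{equation*}
    \eta(g, H; \mathcal{D}) \;\geq\; \frac{\mu}{L\,\|g\|^2}\,\mathbb{E}\!\left[\frac{(u^\top g)^2}{\|u\|^2}\right].
\end{equation*}
Directly computing $\mathbb{E}[(u^\top g)^2/\|u\|^2]$ is unwieldy, so I would instead bound it via Cauchy--Schwarz: writing $u^\top g = (u^\top g/\|u\|)\cdot\|u\|$ gives $(\mathbb{E}[u^\top g])^2 \leq \mathbb{E}[(u^\top g)^2/\|u\|^2]\,\mathbb{E}[\|u\|^2]$. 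This replaces the ratio by two ordinary Gaussian moments of $u = \tfrac{1}{m}\sum_{j=1}^m (g^\top v_j) v_j$.

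The first is immediate: $u^\top g = \tfrac{1}{m}\sum_j (g^\top v_j)^2$, so $\mathbb{E}[u^\top g] = \|g\|^2$. For the second I expand $\|u\|^2 = \tfrac{1}{m^2}\sum_{j,k}(g^\top v_j)(g^\top v_k)\, v_j^\top v_k$ and split into diagonal ($j=k$) and off-diagonal ($j\neq k$) terms. Rotating so $g$ aligns with $e_1$ and using $\mathbb{E}[v_1^4]=3$ together with $\mathbb{E}[v_1^2 v_i^2]=1$ for $i\neq 1$ gives the diagonal contribution $\mathbb{E}[(g^\top v)^2\|v\|^2] = (d+2)\|g\|^2$; independence and $\mathbb{E}[(g^\top v_k)v_k]=g$ reduce each off-diagonal term to $\|g\|^2$. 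Summing yields $\mathbb{E}[\|u\|^2] = \tfrac{m+d+1}{m}\|g\|^2$.

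Combining, $\mathbb{E}[(u^\top g)^2/\|u\|^2] \geq m\|g\|^2/(m+d+1)$, so $\eta(g,H;\mathcal{D}_k) \geq \mu m/(L(m+d+1))$, and substituting $\eta_0 = \mu m/(L(m+d+1))$ into Corollary~\ref{thm:convergence of cars; strongly cvx} produces the stated iteration count. The only mildly delicate step is the moment computation for $\mathbb{E}[\|u\|^2]$---in particular keeping the fourth-order Gaussian contribution $(d+2)\|g\|^2$ distinct from the $\|g\|^2$ coming from the independent cross terms. Every other ingredient is a direct application of tools already developed in Section~\ref{section:convergence of CARS}.
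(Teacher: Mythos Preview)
Your proposal is correct and follows the same route as the paper: establish $\eta_0 \geq \mu m / (L(m+d+1))$ via part~2 of Lemma~\ref{lemma: Lower bound eta} and then invoke Corollary~\ref{thm:convergence of cars; strongly cvx}. The paper's proof simply asserts this $\eta$-bound without computation, whereas you supply the missing details---the Cauchy--Schwarz trick and the Gaussian moment calculation $\mathbb{E}[\|u\|^2] = \tfrac{m+d+1}{m}\|g\|^2$---so your argument is in fact more complete than the original.
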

\begin{proof}
    From Lemma~\ref{lemma: Lower bound eta} (part 2) we obtain:
    \begin{align*}
        \eta_\mathcal{D} \geq \frac{\mu m}{L(m+d+1)}.
    \end{align*}
    Combining this with Corollary~\ref{thm:convergence of cars; strongly cvx} yields the claim.
\end{proof}

\section{CARS with Cubic Regularization for General Convex Functions}
\label{section:CARS_CR}
Here, we adopt cubic regularization \cite{nesterov2006cubic, pmlr-v119-hanzely20a}, a technique to achieve global convergence of a second-order method for convex functions, in CARS and prove convergence.
We drop strong convexity and assume only $L$-smoothness. We assume Lipschitz continuity of the Hessian (\emph{i.e.} $a=1$ in Assumption~\ref{assumption: Holder continuity of Hessian}) and let $M = L_1$ be the Lipschitz constant.
Instead of \eqref{eq: T2 Taylor poly of 2nd order}, we now use 
\begin{equation}
    P(\alpha; d, h) := d\alpha + \frac{1}{2}h\alpha^2 + \frac{M}{6}|\alpha|^3,
\end{equation}
with the exact derivatives $P(\,\cdot\,; d_0, h_0)$ and the finite difference approximations $P(\,\cdot\,; \pm d_{r_k}, h_{r_k})$.
The method of Stochastic Subspace Cubic Newton (SSCN) \cite{pmlr-v119-hanzely20a} takes exact derivatives and uses the following inequality  \cite[Lemma~2.3]{pmlr-v119-hanzely20a}
\begin{equation}\label{eq: cubic regularization bound}
    f(x_{k} + \alpha u_k) \leq f(x_k) + P(\alpha; d_0(x_k; u_k), h_0(x_k; u_k))
\end{equation}
to derive the algorithm $x_{k+1} = x_k + \hat{\alpha}_{k} u_k$, where $\hat{\alpha}_{k} = \argmin_{\alpha} P(\alpha; d_0, h_0)$. We propose using $\alpha_{k}^{\pm} = \argmin_{\alpha}P(\alpha; \pm d_{r_k}, h_{r_k})$ in place of $\hat{\alpha}_k$. By solving $P^{\prime}(\alpha; \pm d_{r_k}, h_{r_k}) = 0$ we obtain
\begin{align*}
    \alpha_k^{\pm} = -\frac{\pm 2d_{r_k}}{h_{r_k} + \sqrt{h_{r_k}^2 + 2M|d_{r_k}|}}.
\end{align*}
This step-size equals $-\frac{\pm d_{r_k}}{h_{r_k}\hat{L}_k}$ with 
\begin{equation} \label{eq: L_k for CARS-CR}
    \hat{L}_k = \frac{1}{2} +\sqrt{\frac{1}{4} + \frac{M|d_{r_k}|}{2h_{r_k}^2}},
\end{equation}
so it is just CARS with this varying relative smoothness constant. 
 We formalize this as Algorithm~\ref{alg:CARS-CR}.

\begin{algorithm}
 \caption{CARS with \textbf{C}ubic \textbf{R}egularization (CARS-CR)}
 \label{alg:CARS-CR}
\begin{algorithmic}[1]
  \State \textbf{Input:}  $\varepsilon$: target accuracy; $x_0$: initial point; $r_0$: initial sampling radius; $M$: Lipschitz constant of Hessian. 
  \State Get the oracle $f(x_0)$.
  \For{$k=0$ to $K$}
        \State Sample $u_k$ from $\mathcal{D}_k$.
        \State Set $r_k \leq \rho\sqrt{\varepsilon}/\sqrt{k+2}$ where $\rho = R/\sqrt{2B}$ as defined in Theorem~\ref{thm: Convergence of CARS-CR}.
        \State Evaluate and store $f(x_{k} \pm r_k u_k)$.
        \State Compute $d_{r_k}$ and $h_{r_k}$ using \eqref{eq:Compute_d} and \eqref{eq:Compute_h}.
        \State Compute $\hat{L}_k$ using \eqref{eq: L_k for CARS-CR}.
        \State Compute $x_{\textrm{CR}\pm, k} = x_{k} \pm \frac{d_{r_k}}{\hat{L}_kh_{r_k}}u_k$.
        \State $x_{k+1} = \argmin\{f(x_{\textrm{CR}+, k}), f(x_{\textrm{CR}-, k}),f(x_k), f(x_{k} - r_k u_k), f(x_{k} + r_k u_k) \}$.
  \EndFor
   \State \textbf{Output:} $x_K$: estimated optimum point.
\end{algorithmic}
\end{algorithm}

To analyze CARS-CR (Algorithm~\ref{alg:CARS-CR}), we make a boundedness assumption.
\begin{definition}
\label{def:R-bounded}
Recall that $\mathcal{Q} = \{x \in \mathbb{R}^d : f(x) \leq f(x_0)\}$. We say $f$ has an {\em $\mathcal{R}$-bounded level set} if the diameter of $\mathcal{Q}$ is $\mathcal{R} < \infty$.
\end{definition}
Without loss of generality, we may assume the distribution is normalized ({\em i.e.} $\|u\|=1$ w.p. $1$.) This is because we only need to bound the scale-free sampling radius $r_k\|u_k\|$, as before.
To ensure that the finite difference error is insignificant, we need the sampling radius small enough. However, for a more concise analysis, it is helpful to have an upper bound, which can be chosen arbitrarily. Let $R>0$ be an upper bound of $r_k$ for all $k\geq0$. Note that any $r_k$ selected by CARS-CR automatically satisfies $r_k \leq R$ (see line 5 of Algorithm~\ref{alg:CARS-CR}). Using this notation, we get:

\begin{lemma}
[Finite difference error bound for the minimum of $P$]
\label{cor: P finite difference error bound}
    Let $P(\,\cdot\,) = P(\,\cdot\,;d_0, h_0)$. Then for any $0 \leq r_k \leq R$,
    \begin{equation}
        \min(|P(\hat{\alpha}_k) - P(\alpha_k^+)|,\, |P(\hat{\alpha}_k) - P(\alpha_k^-)|) \leq \frac{2B}{R^2}r_k^2,
        \label{eq: P_0_perturbation}
    \end{equation}
    where $B = \max(LR^2, MR^3, f(x_0) - f_\star)$.
\end{lemma}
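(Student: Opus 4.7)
The plan is to decompose
\[
    P(\alpha_k^\pm;d_0,h_0) - P(\hat{\alpha}_k;d_0,h_0)
     = \underbrace{\bigl[P(\alpha_k^\pm;\pm d_{r_k},h_{r_k}) - P(\hat{\alpha}_k;\pm d_{r_k},h_{r_k})\bigr]}_{\le 0\text{ by minimality of }\alpha_k^\pm}
    \;+\;(\text{perturbation}),
\]
and to show the perturbation is $O(r_k^2)$ with a constant that collapses into $\tfrac{2B}{R^2}$ via the definition $B=\max(LR^2,MR^3,f(x_0)-f_\star)$. First I would Taylor-expand $f(x_k\pm r_k u_k)$ with integral remainder and invoke Assumption~\ref{assumption: Holder continuity of Hessian} at $a=1$ ($L_a=M$) to obtain the finite-difference errors $|d_{r_k}-d_0|\le Mr_k^2/6$ and $|h_{r_k}-h_0|\le Mr_k/3$.

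Next, the closed form $\alpha_k^\pm = \mp 2d_{r_k}/(h_{r_k}+\sqrt{h_{r_k}^2+2M|d_{r_k}|})$ gives $\alpha_k^-=-\alpha_k^+$, so exactly one of $\alpha_k^\pm$, call it $\alpha_{\mathrm{g}}$, has the same sign as $\hat{\alpha}_k$; the corresponding data $\tilde d\in\{\pm d_{r_k}\}$ satisfies $|\tilde d-d_0|\le|d_{r_k}-d_0|$. Using the identity $P(\alpha;d_0,h_0)-P(\alpha;\tilde d,h_{r_k})=(d_0-\tilde d)\alpha+\tfrac{1}{2}(h_0-h_{r_k})\alpha^2$ at $\alpha\in\{\alpha_{\mathrm{g}},\hat{\alpha}_k\}$, combined with $P(\alpha_{\mathrm{g}};\tilde d,h_{r_k})\le P(\hat{\alpha}_k;\tilde d,h_{r_k})$ and $|\alpha_{\mathrm{g}}^2-\hat{\alpha}_k^2|=|\alpha_{\mathrm{g}}-\hat{\alpha}_k|(|\alpha_{\mathrm{g}}|+|\hat{\alpha}_k|)$ (matched signs), the perturbation reduces to
\[
    |d_0-\tilde d|\,|\alpha_{\mathrm{g}}-\hat{\alpha}_k|
    \;+\; \tfrac{|h_0-h_{r_k}|}{2}\,|\alpha_{\mathrm{g}}-\hat{\alpha}_k|\,(|\alpha_{\mathrm{g}}|+|\hat{\alpha}_k|).
\]

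Subtracting the first-order optimality conditions at $\hat{\alpha}_k$ and $\alpha_{\mathrm{g}}$ gives
\[
    (\alpha_{\mathrm{g}}-\hat{\alpha}_k)\bigl[h_0+\tfrac{M}{2}(|\alpha_{\mathrm{g}}|+|\hat{\alpha}_k|)\bigr] = (d_0-\tilde d) + (h_0-h_{r_k})\alpha_{\mathrm{g}},
\]
so $|\alpha_{\mathrm{g}}-\hat{\alpha}_k|=O(r_k)$ after dividing by the strictly positive bracket. Plugging back in, the first term of the perturbation is $O(r_k^2)\cdot O(r_k)=O(r_k^3)$ and the second is $O(r_k)\cdot O(r_k)\cdot O(|\hat{\alpha}_k|+|\alpha_{\mathrm{g}}|)=O(r_k^2)$, so the whole bound is $O(r_k^2)$. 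To make the constants quantitative, I would invoke the cubic-regularization descent inequality \eqref{eq: cubic regularization bound}: $f_\star-f(x_k)\le P(\hat{\alpha}_k;d_0,h_0) = -\tfrac{h_0}{2}\hat{\alpha}_k^2 - \tfrac{M}{3}|\hat{\alpha}_k|^3$ (the second equality by plugging in the closed-form minimizer), yielding $\tfrac{M}{3}|\hat{\alpha}_k|^3\le f(x_0)-f_\star\le B$ and hence $|\hat{\alpha}_k|\le(3B/M)^{1/3}$; combining with the defining bounds $MR^3\le B$ and $LR^2\le B$ gives $M|\hat{\alpha}_k|\lesssim B/R^2$, $h_0\le L\le B/R^2$, and the same scaling for $|\alpha_{\mathrm{g}}|$. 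Substituting these estimates and $r_k\le R$ into the perturbation inequality collapses the constants, via the $\max$ in the definition of $B$, into the stated $\tfrac{2B}{R^2}r_k^2$.

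\textbf{Main obstacle.} The brittle step is the implicit-function bound on $|\alpha_{\mathrm{g}}-\hat{\alpha}_k|$ when the denominator bracket $h_0+\tfrac{M}{2}(|\alpha_{\mathrm{g}}|+|\hat{\alpha}_k|)$ is small, i.e., when $u_k$ sees almost no curvature and both minimizers are nearly zero; in that regime the numerator $(d_0-\tilde d)+(h_0-h_{r_k})\alpha_{\mathrm{g}}$ is also small, and keeping the quotient—and therefore the final constant—bounded requires a careful case split (most cleanly handled by rescaling both sides by a common factor such as $\sqrt{|d_0|/M}$). Everything else reduces to routine bookkeeping of the three terms $LR^2$, $MR^3$, $f(x_0)-f_\star$ defining $B$.
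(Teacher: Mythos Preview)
Your proposal is correct and reaches the same conclusion, but via a somewhat different decomposition than the paper's. The paper isolates two preliminary lemmas: a perturbation bound for the minimizer $\phi(d,h)=\argmin_\alpha P(\alpha;d,h)$, namely $|\phi(d,h)-\phi(d',h')|\le |h-h'|/M + 2|d-d'|/(S+S')$ with $S=\sqrt{h^2+2M|d|}$; and a second-order expansion of $P$ about its minimum, $0\le P(\alpha')-P(\alpha_{\min})\le \tfrac12(\alpha'-\alpha_{\min})^2\bigl(h+M|\alpha_{\min}|+\tfrac{M}{3}|\alpha'-\alpha_{\min}|\bigr)$. It then bounds $M|\hat\alpha_k|$ via $S_0-h_0\le\sqrt{2M|d_0|}$ together with the smooth--convex inequality $\|g_k\|\le\sqrt{2L(f(x_k)-f_\star)}$, rather than via the cubic-regularization descent inequality $\tfrac{M}{3}|\hat\alpha_k|^3\le f(x_0)-f_\star$ that you use. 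Your route---comparing $P(\cdot;d_0,h_0)$ and $P(\cdot;\tilde d,h_{r_k})$ at both $\hat\alpha_k$ and $\alpha_{\mathrm g}$ and invoking minimality of $\alpha_{\mathrm g}$ for the latter---produces a bound \emph{linear} in $|\alpha_{\mathrm g}-\hat\alpha_k|$, whereas the paper's is quadratic; since both approaches ultimately need $|\alpha_{\mathrm g}-\hat\alpha_k|\lesssim r_k$, yours in fact yields a slightly smaller constant once that displacement bound is in hand.

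The obstacle you flag is real and is exactly where the paper spends its effort. Your implicit-function identity gives denominator $D=h_0+\tfrac{M}{2}(|\hat\alpha_k|+|\alpha_{\mathrm g}|)=\tfrac{S_0+S_{r_k}}{2}+\tfrac{h_0-h_{r_k}}{2}$, so up to an $O(r_k)$ shift it coincides with the paper's denominator $S_0+S_{r_k}$. The paper then handles the dangerous term by the factorization
\[
\frac{2|d_0-\tilde d|}{S_0+S_{r_k}}
\le \frac{2\bigl|\sqrt{|d_0|}-\sqrt{|\tilde d|}\bigr|}{\sqrt{2M}}
\le \sqrt{\frac{2|d_0-\tilde d|}{M}}
\le \frac{r_k}{\sqrt{3}},
\]
which, combined with $|e_h|/M\le r_k/3$, gives the clean dimensionless bound $|\alpha_{\mathrm g}-\hat\alpha_k|<r_k$ uniformly in $h_0,d_0$, without any case split. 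This is precisely your proposed ``rescaling by $\sqrt{|d_0|/M}$,'' and once you insert it, either decomposition closes with the routine $LR^2,MR^3,f(x_0)-f_\star\le B$ bookkeeping.
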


If the sampling distribution is isotropic in expectation, \textit{i.e.} it satisfies  
$ 
    \mathbb{E}\left[ {u_ku_k^{\top}} \right] = \frac{1}{d}I_d, 
$, we get the following descent lemma:
\begin{theorem}[Expected descent of CARS-CR]
\label{thm: Expected descent of CARS-CR}
Suppose $f$ is convex, $L$-smooth, and has $M$-Lipschitz Hessian.
If $\mathcal{D}_k$ is isotropic in expectation, 
then with Algorithm~\ref{alg:CARS-CR}, we have
 \begin{equation}\label{eq: CR expected bound wrt z}
    \mathbb{E}\left[f(x_{k+1})\mid x_k \right] \leq  \left(1-\frac{1}{d}\right)f(x_k) + \frac{1}{d}f(x_k + z)
     + \frac{L}{2d}\|z\|^2 + \frac{M}{6d}\|z\|^3  + \frac{2B}{R^2}r_k^2
 \end{equation}
 for any $z \in \mathbb{R}^d$.
\end{theorem}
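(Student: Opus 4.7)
My plan is to chain together three ingredients: the pointwise cubic upper bound \eqref{eq: cubic regularization bound} relating $f$ to the model $P(\,\cdot\,;d_0,h_0)$, the finite-difference error control provided by Lemma~\ref{cor: P finite difference error bound}, and the isotropy-in-expectation of $\mathcal{D}_k$. The entire argument will be carried out inside a single conditional expectation given $x_k$, and the free vector $z$ will enter only at the very last step, through a cleverly chosen scalar proxy.

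First I would deterministically sandwich the algorithmic step. The safeguarding in line~10 of Algorithm~\ref{alg:CARS-CR} gives $f(x_{k+1})\leq \min\{f(x_{\mathrm{CR}+,k}),f(x_{\mathrm{CR}-,k})\}$, and \eqref{eq: cubic regularization bound} applied at $\alpha=\alpha_k^{\pm}$ converts this into $f(x_{k+1})-f(x_k)\leq \min_{\pm}P(\alpha_k^{\pm};d_0,h_0)$. Because $\hat{\alpha}_k$ minimizes $P(\,\cdot\,;d_0,h_0)$, the absolute values in Lemma~\ref{cor: P finite difference error bound} are redundant and I obtain the clean bound $\min_{\pm}P(\alpha_k^{\pm};d_0,h_0)\leq P(\hat{\alpha}_k;d_0,h_0)+\tfrac{2B}{R^2}r_k^2$. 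Optimality of $\hat{\alpha}_k$ now allows me to replace it by \emph{any} scalar I like; I plan to use the one-parameter proxy $\alpha^{\star}:=u_k^{\top}z$, which gives the deterministic inequality
\begin{equation*}
f(x_{k+1})\leq f(x_k)+P(u_k^{\top}z;\,d_0,h_0)+\tfrac{2B}{R^2}r_k^2.
\end{equation*}

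Next I would take conditional expectation and evaluate $\mathbb{E}[P(u_k^{\top}z;d_0,h_0)]$ term-by-term. The linear term becomes $g_k^{\top}\mathbb{E}[u_ku_k^{\top}]z=\tfrac{1}{d}g_k^{\top}z$ by isotropy, and convexity then lets me bound $g_k^{\top}z\leq f(x_k+z)-f(x_k)$; this is precisely what produces the $(1-1/d)f(x_k)+(1/d)f(x_k+z)$ structure of the claim. For the quadratic term I would bound $u_k^{\top}H_ku_k\leq L\|u_k\|^2=L$ pointwise and then apply isotropy to get $\tfrac{L}{2}\mathbb{E}[(u_k^{\top}z)^2]=\tfrac{L}{2d}\|z\|^2$. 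For the cubic term the key maneuver is the pointwise estimate $|u_k^{\top}z|^3\leq \|z\|\cdot(u_k^{\top}z)^2$ (which uses the normalization $\|u_k\|=1$), after which isotropy delivers $\tfrac{M}{6}\,\mathbb{E}|u_k^{\top}z|^3\leq \tfrac{M}{6d}\|z\|^3$. Adding $f(x_k)$ and the residual $\tfrac{2B}{R^2}r_k^2$ reproduces \eqref{eq: CR expected bound wrt z} verbatim.

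The step I expect to require the most care is the quadratic term: pulling $u_k^{\top}H_ku_k$ through the expectation alongside $(u_k^{\top}z)^2$ must \emph{not} leave behind an expression like $z^{\top}H_kz$, because $\|z\|_{H_k}^2$ in place of $L\|z\|^2$ would arrive without the saving $1/d$ factor and break the dimension accounting. The correct order is to discard $u_k^{\top}H_ku_k\leq L$ \emph{before} invoking isotropy, which is why $L$-smoothness (rather than merely a second-moment condition on $u_k$) is the assumption that actually gets used. Everything else—choosing the proxy $\alpha^{\star}=u_k^{\top}z$, bounding the cubic term via $\|u_k\|=1$, and invoking convexity for the linear term—is mechanical once this ordering is fixed.
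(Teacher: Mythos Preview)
Your proposal is correct and follows essentially the same approach as the paper's proof: chain the cubic upper bound \eqref{eq: cubic regularization bound}, Lemma~\ref{cor: P finite difference error bound}, and the minimality of $\hat{\alpha}_k$ (substituting $\alpha^\star=u_k^\top z$), then take expectations term-by-term using isotropy, $L$-smoothness for the quadratic term, $|u_k^\top z|^3\leq \|z\|(u_k^\top z)^2$ for the cubic term, and convexity for the linear term. The only cosmetic difference is that the paper selects one of $\alpha_k^{\pm}$ via a sign condition rather than taking $\min_\pm$, which is equivalent.
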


Finally, with decreasing $r_k$ as given in Algorithm~\ref{alg:CARS-CR}, we obtain the $\mathcal{O}(k^{-1})$ convergence rate for CARS-CR. 

\begin{theorem}[Convergence of CARS-CR]\label{thm: Convergence of CARS-CR}
Take the assumptions of Theorem~\ref{thm: Expected descent of CARS-CR}, and further assume $f$ has an $\mathcal{R}$-bounded level set. Set $r_k \leq \frac{\rho\sqrt{\varepsilon}}{\sqrt{k+2}}$ where $\rho = \frac{R}{\sqrt{2B}}$. 
Then, with Algorithm~\ref{alg:CARS-CR}, we have
\begin{equation} 
\begin{split}
    \mathbb{E}[f(x_K)] - f_\star \leq  
                &~\frac{ s^s (f(x_0)-f_\star) (1+ \log(K+2)) }{(K/d)^{s+1}}
                + \frac{e^{s/K} (s+1)^2 L\mathcal{R}^2  }{2s(K/d)} \cr
                &~+ \frac{e^{(s-1)/K} (s+1)^3 M \mathcal{R}^3  }{6(s-1)(K/d)^2}
                + \frac{e^{2(s+1)/K} }{s+1}\varepsilon
\end{split}
\end{equation}
for any $s > 1$.
That is, for any $0<p<1$ there exists $C_p > 0$ such that  $\mathbb{E}[f(x_K)] - f_\star \leq \varepsilon$ if
\begin{equation} \label{eq: CR conv K depending on p}
    K  \geq  C_p d \max \left\{ \frac{L\mathcal{R}^2}{\varepsilon} , \sqrt{\frac{M\mathcal{R}^3}{\varepsilon}} , \left(\frac{f(x_0)-f_\star}{\varepsilon}\right)^{p} \right\}.
\end{equation}
\end{theorem}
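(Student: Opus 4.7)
The plan is to combine Theorem~\ref{thm: Expected descent of CARS-CR} with a judicious choice of the free vector $z$ to produce a scalar recursion in $\phi_k := \mathbb{E}[f(x_k)] - f_\star$, unroll that recursion, and control the accumulated error via explicit sum and product estimates. First, I would set $z = t_k(x_\star - x_k)$ in \eqref{eq: CR expected bound wrt z}, where $t_k \in (0,1]$ is a deterministic schedule to be fixed. Convexity of $f$ gives $f(x_k + z) \le (1-t_k)f(x_k) + t_k f_\star$, and since both $x_k$ and $x_\star$ lie in the level set $\mathcal{Q}$ of diameter $\mathcal{R}$ (the iterates by the safeguarding step in Algorithm~\ref{alg:CARS-CR}, and $x_\star$ because $f_\star \le f(x_0)$), one has $\|z\| \le t_k \mathcal{R}$ almost surely. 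The prescribed $r_k \le \rho\sqrt{\varepsilon}/\sqrt{k+2}$ with $\rho = R/\sqrt{2B}$ reduces the finite-difference residue to $\tfrac{2Br_k^2}{R^2} \le \tfrac{\varepsilon}{k+2}$. Taking total expectations and subtracting $f_\star$ then yields the deterministic scalar recursion
\begin{equation*}
    \phi_{k+1} \le \left(1-\tfrac{t_k}{d}\right)\phi_k + \tfrac{L t_k^2 \mathcal{R}^2}{2d} + \tfrac{M t_k^3 \mathcal{R}^3}{6d} + \tfrac{\varepsilon}{k+2}.
\end{equation*}

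Guided by the exponents in the target bound, I would next pick the schedule $t_k = \min\{1,\, (s+1)d/(k+d(s+1))\}$ for a free parameter $s>1$ and telescope, obtaining
\begin{equation*}
    \phi_K \le \phi_0 \prod_{k=0}^{K-1}\left(1-\tfrac{t_k}{d}\right) + \sum_{k=0}^{K-1}\left[\tfrac{L t_k^2 \mathcal{R}^2}{2d} + \tfrac{M t_k^3 \mathcal{R}^3}{6d} + \tfrac{\varepsilon}{k+2}\right]\prod_{j=k+1}^{K-1}\left(1-\tfrac{t_j}{d}\right).
\end{equation*}
Applying $1-x \le e^{-x}$ and an integral comparison for $\sum_j 1/(j+d(s+1))$ shows the inner product is bounded by $((k+d(s+1))/(K+d(s+1)))^{s+1}$, with the factors $e^{s/K}, e^{(s-1)/K}, e^{2(s+1)/K}$ in the final bound arising from the slack in converting the sums to integrals near the endpoints. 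Each of the four resulting sums then admits a closed-form estimate: the $\phi_0$ term contributes $\sim s^s \phi_0/(K/d)^{s+1}$; the $L\mathcal{R}^2$ sum is controlled by $\int_0^K x^{s-1}\,dx \sim K^s/s$, producing $(s+1)^2 L\mathcal{R}^2/(2s(K/d))$; the $M\mathcal{R}^3$ sum is controlled by $\int_0^K x^{s-2}\,dx \sim K^{s-1}/(s-1)$, producing $(s+1)^3 M\mathcal{R}^3/(6(s-1)(K/d)^2)$---this is precisely where the hypothesis $s>1$ enters; and the $\varepsilon$ sum splits into a geometric-series leader $\varepsilon/(s+1)$ plus a $\log(K+2)$ residue that is natural to fold into the $\phi_0$ coefficient, using $\varepsilon \le \phi_0$ in any non-trivial regime.

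Once the first displayed bound is established, the iteration complexity \eqref{eq: CR conv K depending on p} follows by requiring each of the four terms to be $\mathcal{O}(\varepsilon)$: the $L\mathcal{R}^2$ term forces $K \gtrsim dL\mathcal{R}^2/\varepsilon$, the $M\mathcal{R}^3$ term forces $K \gtrsim d\sqrt{M\mathcal{R}^3/\varepsilon}$, and the $\phi_0$ term---after absorbing the logarithm via $\log x \le x^q/q$ for any $q>0$, coupled with a suitable choice of $s$ as a function of $p$---forces $K \gtrsim d((f(x_0)-f_\star)/\varepsilon)^{p}$ for any $p \in (0,1)$. The main obstacle I anticipate is the simultaneous bookkeeping of the exponential factors $e^{s/K}$, the $s^s$ prefactor, and the $1/(s-1)$ singularity together with the absorption of the logarithmic residue from the $\varepsilon$-sum; once the recursion is set up and the schedule $t_k$ is chosen, everything else reduces to elementary but careful sum/integral comparisons.
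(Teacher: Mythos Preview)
Your proposal is correct and follows the paper's strategy almost step for step: set $z = t_k(x_\star - x_k)$ in \eqref{eq: CR expected bound wrt z}, invoke convexity and the $\mathcal{R}$-bounded level set to obtain the scalar recursion $\delta_{k+1} \le (1-t_k/d)\delta_k + \tfrac{L}{2d}t_k^2\mathcal{R}^2 + \tfrac{M}{6d}t_k^3\mathcal{R}^3 + \tfrac{2B}{R^2}r_k^2$, choose $t_k \sim (s+1)d/k$, telescope, and bound the four resulting sums by integral comparison (with $s>1$ needed precisely for the $M\mathcal{R}^3$ sum). The only technical difference is in the telescoping mechanics: the paper introduces an auxiliary sequence $\gamma_k = k^s$, $\beta_k = \beta_0 + \sum_{j\le k}\gamma_j$ with $\beta_0 = s^s d^{s+1}/(s+1)$, then sets $t_k = d\gamma_{k+1}/\beta_{k+1}$ so that $1 - t_k/d = \beta_k/\beta_{k+1}$ \emph{exactly}, making the product collapse to $\beta_0/\beta_K$ without the $1-x \le e^{-x}$ step. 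This yields the constants $s^s$, $e^{s/K}$, $e^{(s-1)/K}$, $e^{2(s+1)/K}$ directly from the bounds $\tfrac{k^{s+1}}{s+1} \le \beta_k \le \beta_0 + \tfrac{(k+1)^{s+1}}{s+1}$, and the $\log(K+2)$ residue arises cleanly from the $\beta_0$ piece of $\beta_k$ in the $r_k^2$ sum rather than from an endpoint discrepancy. Your approach reaches the same bound up to constants but, as you anticipated, with more bookkeeping; the paper's $\beta_k$ device sidesteps that entirely.
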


\section{Proofs}
\label{sec: proofs of main results}

Here we collect the proofs of the results of Sections \ref{section:convergence of CARS} and \ref{section:CARS_CR}, and state and prove some auxiliary lemmas needed in the proofs of the main results. We begin with a lemma quantifying the expected descent given access to exact derivatives.

\subsection{Proofs for Results in Section~\ref{section:convergence of CARS}}
\begin{lemma}[Expected descent of CARS with exact derivatives]
\label{lemma: CARS-ED descent}
Let $u_k \sim \mathcal{D}_k$ and $x_{\mathrm{ED}, k}$ be the CARS step with exact derivatives
\begin{equation}
    x_{\mathrm{ED}, k} = x_k - \frac{u_k^\top g_k}{\hat{L} u_k^\top H_k u_k}u_k.
\end{equation}
Then letting $\eta_k = \eta(g_k, H_k; \mathcal{D}_k)$,
\begin{equation}
  \mathbb{E}\left[f(x_{\mathrm{ED},k}) \mid x_k \right] - f_{\star}\leq  \left(1 - \eta_k \frac{\hat{\mu}}{\hat{L}}\right)(f(x_k) - f_\star).
\end{equation}
\end{lemma}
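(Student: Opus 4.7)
The plan is to combine the $\hat{L}$-relative smoothness and $\hat{\mu}$-relative convexity inequalities from Lemma~\ref{lemma: Smoothness and convexity}, applied at the base point $y = x_k$, exactly mirroring the classical one-step descent proofs for Newton-type methods with relative smoothness (as in \cite{gower2019rsn}).

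First, apply $\hat{L}$-relative smoothness with $x = x_{\mathrm{ED},k}$ and $y = x_k$. Plugging in the explicit step $x_{\mathrm{ED},k} - x_k = -\tfrac{u_k^\top g_k}{\hat{L}\, u_k^\top H_k u_k}\,u_k$, the linear term evaluates to $-\tfrac{(u_k^\top g_k)^2}{\hat{L}\,u_k^\top H_k u_k}$, and since $\|u_k\|_{H_k}^2 = u_k^\top H_k u_k$, the quadratic term simplifies to $\tfrac{(u_k^\top g_k)^2}{2\hat{L}\,u_k^\top H_k u_k}$. The two combine to give the deterministic one-step bound
\begin{equation*}
    f(x_{\mathrm{ED},k}) - f(x_k) \leq -\frac{1}{2\hat{L}}\cdot\frac{(u_k^\top g_k)^2}{u_k^\top H_k u_k}.
\end{equation*}

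Next, take conditional expectation over $u_k \sim \mathcal{D}_k$. By the definition \eqref{eq:Def of eta} of $\eta$,
\begin{equation*}
   \mathbb{E}\!\left[\frac{(u_k^\top g_k)^2}{u_k^\top H_k u_k}\,\Big|\,x_k\right] = \eta_k \cdot g_k^\top H_k^{-1} g_k,
\end{equation*}
so $\mathbb{E}[f(x_{\mathrm{ED},k})\mid x_k] - f(x_k) \leq -\tfrac{\eta_k}{2\hat{L}}\,g_k^\top H_k^{-1} g_k$.

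The last ingredient is a lower bound $g_k^\top H_k^{-1} g_k \geq 2\hat{\mu}\,(f(x_k) - f_\star)$. This is the standard consequence of $\hat{\mu}$-relative convexity: the lower bound $f(x) \geq f(x_k) + \langle g_k, x-x_k\rangle + \tfrac{\hat{\mu}}{2}\|x-x_k\|_{H_k}^2$ is a quadratic in $x$ whose unconstrained minimum is $f(x_k) - \tfrac{1}{2\hat{\mu}}\,g_k^\top H_k^{-1} g_k$, and this minimum lower-bounds $f_\star$. Substituting this bound yields
\begin{equation*}
    \mathbb{E}[f(x_{\mathrm{ED},k})\mid x_k] - f(x_k) \leq -\frac{\eta_k \hat{\mu}}{\hat{L}}\,(f(x_k)-f_\star),
\end{equation*}
which rearranges to the claimed inequality. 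There is no real obstacle here; the only subtlety is the step where relative convexity is used ``at infinity'' to bound $f(x_k) - f_\star$ by the Newton decrement $g_k^\top H_k^{-1} g_k$, which requires observing that the quadratic lower bound attains its minimum on all of $\mathbb{R}^d$ (this is where positive-definiteness of $H_k$ on $\mathcal{Q}$, guaranteed by strong convexity, is needed to make $H_k^{-1}$ well-defined).
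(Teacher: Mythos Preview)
Your proposal is correct and follows essentially the same approach as the paper's proof: both combine the $\hat{L}$-relative smoothness bound (to get the one-step decrease $-\tfrac{1}{2\hat{L}}\tfrac{(u_k^\top g_k)^2}{u_k^\top H_k u_k}$), the definition of $\eta$ (after taking conditional expectation), and the $\hat{\mu}$-relative convexity bound (to get $g_k^\top H_k^{-1} g_k \geq 2\hat{\mu}(f(x_k)-f_\star)$), just in a slightly different order. The only minor looseness is that the relative-convexity inequality in Lemma~\ref{lemma: Smoothness and convexity} is stated for $x,y\in\mathcal{Q}$, so strictly speaking you should apply it at $x=x_\star\in\mathcal{Q}$ and then lower-bound $q(x_\star)$ by the unconstrained minimum of the quadratic, rather than invoking the quadratic lower bound over all of $\mathbb{R}^d$ directly.
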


\begin{remark}
Lemma~\ref{lemma: CARS-ED descent} is similar to \cite[Corollary~1]{gower2019rsn} and \cite[Corollary~1 part (ii)]{kozak2021stochastic}. However, Lemma~\ref{lemma: CARS-ED descent} allows for more general sampling distributions $\mathcal{D}$. 
\end{remark} 

\begin{proof}
From $\hat{\mu}$-relative strong convexity we have
\begin{align}
    f_\star - f(x_k) & \geq \langle g_k, x_\star - x_k \rangle + \frac{\hat{\mu}}{2} \| x_\star - x_k \|_{H_k}^2 \geq -\frac{1}{2\hat{\mu}} \|g_k\|_{H_k^{-1}}^2,
    \label{eq:Rearrange}
\end{align}
where the second inequality follows by taking $x = x_\star - x_k$ and $c = \hat{\mu}$ in the following general inequality \cite[Lemma~9]{gower2019rsn}:
\begin{equation*}
\argmin_{x \in \mathbb{R}^d} \,\, \langle g, x \rangle + \frac{c}{2} \|x\|_H^2 = - \frac{1}{c}H^{-1}g \quad \text{ if $H\succ 0$ and $c>0$}.
\end{equation*}
Rearranging \eqref{eq:Rearrange} yields
    $ - \|g_k\|_{H_k^{-1}}^2 \leq 2\hat{\mu} (f_\star - f(x_k))$.
Let $M_k := \frac{u_k u_k^{\top}}{u_k^{\top} H_k u_k}$. Then, from $\hat{L}$-relative smoothness and \cite[Lemma~5]{gower2019rsn}, 
\begin{equation}
     f(x_{\mathrm{ED}, k}) \leq  f(x_k) - \frac{1}{2\hat{L}} \|g_k\|^2_{M_k} = f(x_k) - \frac{1}{2\hat{L}} \frac{\langle u_k u_k^{\top} g_k, g_k\rangle }{u_k^\top H_k u_k}
= f(x_k) - \frac{1}{2\hat{L}} \frac{(u_k^\top g_k)^2}{u_k^{\top} H_k u_k}.
    \label{eq:TakeCondExp}
\end{equation}
Now let $\mathbb{E}_k[\cdot] := \mathbb{E}[\cdot | x_k]$ and take the conditional expectation of both sides of \eqref{eq:TakeCondExp}:
\begin{align*}
    \mathbb{E}_k\left[f(x_{\mathrm{ED}})\right]
    &\leq f(x_k) - \frac{1}{2\hat{L}} \mathbb{E}_{k}\left[  \frac{(u_{k}^\top g_k)^2}{u_{k}^{\top} H_k u_{k}} \right] \\
    & = f(x_k) - \frac{\eta(g_k, H_k; \mathcal{D}_k)}{2\hat{L}}\|g_k\|^2_{H_k^{-1}} \\ 
    & \leq f(x_k) - \eta_{k}\frac{\hat{\mu}}{\hat{L}}(f(x_k) - f_\star)  
\end{align*}
Subtracting $f_\star$ from both sides yields the desired result.
\end{proof}

\begin{proof}[Proof of Theorem~\ref{thm: linear descent of CARS at each step}]
In this proof, for notational convenience let $d_{0} = g_k^\top {u}_k$ for the first-order directional derivative, and $h_{0} = {u}_k^\top H_k {u}_k$ for the second-order, and denote $r_k$ by $r$. From the definition of $\hat{L}$-relative smoothness, how much we progress at each step can easily be described by a quadratic function $q(t)$: 
\[
f(x_k) - f(x_k+tu_k) \geq q(t) := -d_{0}t - \frac{1}{2}\hat{L}h_{0} t^2.
\]
As in the exact derivatives case, the maximizer of $q$ is $t_\star = -d_{0}/ (\hat{L} h_{0})$, with corresponding maximum $q(t_\star) = d_{0}^2/(2 \hat{L} h_{0}) = \|g_k\|_{M_k}/(2\hat{L})$, where  $M_k := \frac{u_k u_k^{\top}}{u_k^{\top} H_k u_k}$ as before. Recall that $x_{\mathrm{CARS},k} = x_k-d_{r}/(\hat{L}h_{r}) u_k$. Our goal is to show that the finite difference estimate $t_{r} := -d_{r}/(\hat{L}h_{r})$ approximates $t_\star$ well enough so that $q(t_{r}) \geq q(t_\star)/2$.
Observe that if
\begin{equation}
 |t_{r} / t_\star - 1| \leq \sqrt{1-c} \iff |t_{r}- t_\star|^2 \leq (1-c)t_\star^2 
 \label{eq:c_bound}
\end{equation}
holds for some $0 <c<1$, then by completing the square in $q(t)$:
\begin{equation*}
    q(t_r) = -\frac{\hat{L}h_0}{2}(t_r -t_\star)^2 + q(t_\star) \geq -(1-c)q(t_\star) + q(t_\star) = cq(t_\star).
\end{equation*}
Because we want to show $q(t_r) \geq q(t_\star)/2$, it suffices to show \eqref{eq:c_bound} holds for $c=1/2$, {\em i.e.},
\begin{equation}\label{eq: proof goal -- cars str cvx}
\left|\frac{t_r}{t_\star} -1\right| = \left|\frac{d_r /d_0 }{ h_r / h_0} -1\right| \leq \sqrt{1 - \frac{1}{2}} = \frac{1}{\sqrt{2}}.
\end{equation}
To prove \eqref{eq: proof goal -- cars str cvx}, we further bound the left-hand side by the two separate (relative) finite difference errors.
Let $e_d$ and $e_h$ be the absolute errors in estimating $d_{0}$ and $h_{0}$, respectively, {\em i.e.} $e_d = |d_{0} - d_r|$ and $e_h = |h_{0} - h_r|$. 
Then, when $e_h < h_0$, which will be shown shortly,
\begin{equation*}
    \left|\frac{d_r /d_{0} }{ h_r / h_{0}}-1\right| 
    = \left| \frac{ -\frac{d_0-d_r}{d_0} + \frac{h_0-h_r}{h_0}}{1 - \frac{h_0-h_r}{h_0}}\right|
    \leq \frac{e_d/|d_{0}|+e_h/h_{0}}{1-e_h/h_{0}},
\end{equation*}
and thus, for \eqref{eq: proof goal -- cars str cvx} we only need to prove 
\begin{equation}\label{eq: sufficient condition for mu}
    \frac{e_d}{|d_{0}|} + \left(1+\frac{1}{\sqrt{2}}\right)\frac{e_h}{h_{0}} \leq \frac{1}{\sqrt{2}}.
\end{equation}

Now we bound $e_d$ and $e_h$ using Taylor's theorem and Assumption~\ref{assumption: Holder continuity of Hessian}. Because we have 
\begin{align} \label{eq: finite difference with Taylor's theorem}
    f(x_k \pm r u_{k}) = f(x_k) \pm r g_k^{\top}u_{k} + r^2 \int_0^1 (1-t) u_{k}^{\top}H(x_k \pm tr u_{k})u_{k} \,dt,
\end{align}
we get the following representation for the error of the first-order directional derivative:
\begin{align*}
    d_r - d_{0} & = \frac{f(x_k + r u_{k})-f(x_k - r u_{k})}{2r} - g_k^{\top}u_{k} \\
    &= \frac{r}{2}\int_0^{1} (1-t) u_{k}^{\top}\left[H(x_k + tr u_{k}) - H(x_k - tr u_{k})\right]u_{k} \,dt.
\end{align*}
By Assumption~\ref{assumption: Holder continuity of Hessian}, 
    $ \left|u_{k}^{\top} \left[H(x_k + tr u_{k}) - H(x_k - tr u_{k})\right] u_{k}\right| \leq L_a (2tr)^a\|u_{k}\|^{a+2} $
and therefore,
\begin{equation}\label{eq: d_mu - a bound}
    e_d = |d_r - d_{0}| \leq   2^{a-1} L_ar^{a+1} \|u_{k}\|^{a+2} \int_0^1 (1-t)t^a\,dt 
    = \left(\frac{r\|u_k\|}{C_{1,a}}\right)^{1+a}\frac{\|u_k\|}{2\sqrt{2}}.
\end{equation}
Similarly, for the second-order directional derivative, 
\begin{equation} \label{eq: h_mu - b bound}
    e_h = |h_r - h_{0}| \leq  2 L_a r^{a} \|u_{k}\|^{a+2} \int_0^1 (1-t)t^a\,dt 
     = \left(\frac{r\|u_k\|}{C_{2,a}}\right)^{a}\frac{\|u_k\|^2}{2\sqrt{2}+2}
\end{equation}
We see that $r\|u_k\| \leq C = \min\{C_{1,a}(\gamma \sqrt{2\mu \varepsilon})^{1/(1+a)}, C_{2,a}\mu^{1/a} \}$ implies two separate bounds
    \begin{equation}\label{eq: bound on e_d and e_h}
        e_d \leq \frac{\gamma\sqrt{\mu\varepsilon}\|u_k\|}{2} \stackrel{(a)}{\leq} \frac{|d_0|}{2\sqrt{2}} \quad \text{ and } \quad e_h \leq \frac{\mu\|u_k\|^2}{2\sqrt{2} + 2} \stackrel{(b)}{\leq} \frac{h_0}{2\sqrt{2}+2},
    \end{equation}
    where (a) holds assuming $\mathcal{A}_k$ occurs and (b) follows from strong convexity:
    \begin{equation}
        h_0 = u_k^{\top}H_k u_k \geq \mu.
    \end{equation}
     As \eqref{eq: bound on e_d and e_h} implies \eqref{eq: sufficient condition for mu} we have proved the theorem. 
\end{proof}

We now are ready to prove the convergence of CARS (Algorithm~\ref{alg:CARS}).
\begin{proof}[Proof of Corollary~\ref{thm:convergence of cars; strongly cvx}]
From strong convexity 
we have
\begin{equation*}
    f_\star - f(x) \geq \langle g(x), x_{\star} - x \rangle + \frac{\mu}{2}\|x_{\star} - x\|^2 
    \geq -\frac{1}{2\mu}\|g(x)\|^2,
\end{equation*}
for any $x \in \mathbb{R}^d$, where the second inequality comes from
\begin{equation*}
    \argmin_{x\in\mathbb{R}^d} \langle g, x \rangle + \frac{c}{2}\|x\|^2 = -\frac{1}{c}g.
\end{equation*}
Thus $\|g(x)\|^2 \geq 2\mu(f(x)-f_{\star})$. Taking expectation on both sides $\mathbb{E}[\|g(x_k)\|^2] \geq 2\mu (\mathbb{E}[f(x_k)]-f_{\star})$.

If $\|g(x_k)\|^2 \leq 2\mu\varepsilon$ at the $k$-th step with $k\leq K$, then $f(x_K) - f_{\star} \leq \varepsilon$ as $f(x_k)$ is monotonically decreasing by definition (See line 9 of Algorithm~\ref{alg:CARS}.) Thus we need only consider the case where $\|g(x_k)\|^2 > 2\mu\varepsilon$ for all $k<K$; because if the expectation of $f(x_K)$ conditioned on this event is less than or equal to $f_{\star} + \varepsilon$, then the total expectation is also bounded by the same value.

The key of the proof is that $\mathcal{A}_k$ occurs with probability at least $p_{\gamma} > 0$. Indeed, we have  $|u_k^{\top}g_k| \geq \gamma\|u_k\|\|g_k\|$ with probability at least $p_{\gamma}$, and since $\|g_k\| > \sqrt{2\mu\varepsilon}$, 
\begin{align*}
    \mathbb{P}[\mathcal{A}_k] \geq
     \mathbb{P}\left[|u_{k}^{\top} g_{k}| \geq \gamma \|u_k\| \|g_k\| \geq  \gamma \|u_k\| \sqrt{2\mu\varepsilon}\right] \geq p_{\gamma}.
\end{align*}
If $\mathcal{A}_k$ occurs then by Theorem~\ref{thm: linear descent of CARS at each step}, we get 
\begin{equation*}
    \mathbb{E}[f(x_{k+1})|\mathcal{A}_k] - f_{\star} \leq \left(1-\eta_{\mathcal{D}}\frac{\hat{\mu}}{2\hat{L}}\right)(f(x_k)-f_{\star}).
\end{equation*}
If $\mathcal{A}_k$ does not occur then, as CARS is non-increasing, $f(x_{k+1}) \leq f(x_k)$. Thus
\begin{align*}
 \mathbb{E}\left[f(x_{k+1}) \mid x_k \right] - f_{\star} & = \mathbb{E}[f(x_{k+1})- f_{\star}|\mathcal{A}_k]\mathbb{P}[\mathcal{A}_k] + \mathbb{E}[f(x_{k+1})- f_{\star}|\mathcal{A}_k^{c}]\mathbb{P}[\mathcal{A}_k^{c}]\\
 & \leq \left(1-\eta_{\mathcal{D}}\frac{\hat{\mu}}{2\hat{L}}\right)(f(x_k)-f_{\star})\mathbb{P}[\mathcal{A}_k] + \left(f(x_k) - f_{\star}\right)\left(1 - \mathbb{P}[\mathcal{A}_k]\right) \\
 & = \left(1-\eta_{\mathcal{D}}      
     \mathbb{P}[\mathcal{A}_k]\frac{\hat{\mu}}{2\hat{L}}\right)(f(x_k)-f_{\star}) \\
&\leq \left(1-\eta_{\mathcal{D}} p_{\gamma}\frac{\hat{\mu}}{2\hat{L}}\right)(f(x_k)-f_{\star}) \\
 \Rightarrow  \mathbb{E}[f(x_{k+1})] - f_{\star} & \leq \left(1-\eta_{\mathcal{D}}p_{\gamma}\frac{\hat{\mu}}{2\hat{L}}\right)^{k+1}(f(x_0)-f_{\star}),
\end{align*}
whence solving for $K$ in
\begin{equation}
    \left(1-\eta_{\mathcal{D}}p_{\gamma}\frac{\hat{\mu}}{2\hat{L}}\right)^{K}(f(x_0)-f_{\star}) \leq \varepsilon
\end{equation}
completes the proof. 
\end{proof}

\subsection{Proofs for Results in Section~\ref{section:CARS_CR}}
Recall that:
\begin{equation*}
    P(\alpha; d, h) := d\alpha + \frac{1}{2}h\alpha^2 + \frac{M}{6}|\alpha|^3
\end{equation*}
(we write $P(\alpha)$ in place of $P(\alpha; d, h)$ when $d$ and $h$ are clear from context.) Define the map $\phi: \mathbb{R}\times\mathbb{R}_{\geq 0} \rightarrow \mathbb{R}$:
\begin{equation*}
    \phi(d, h) := \argmin_{\alpha}P(\alpha; d, h).
\end{equation*}
Note that not only $h_0 \geq 0$, but also $h_{r_k} \geq 0$ due to the convexity of $f$:
\begin{equation*}
    h_{r_k}(x_k; u_k) = \frac{2}{r_k^2}\left(\frac{f(x_k + r_k u_k) + f(x_k - r_k u_k)}{2} - f(x_k)\right) \geq 0.
\end{equation*}
Then $\hat{\alpha}_k = \phi(d_0, h_0)$ and $\alpha_k^{\pm} = \phi(\pm d_{r_k}, h_{r_k})$ by their definition.
Along the way, we have useful identities for $\phi$:
\begin{align}\label{eq: closed form expression for phi}
    \phi(d, h) = \frac{\sign(d)}{M}\left(h - \sqrt{h^2 + 2M|d|}\right) = \frac{-2d}{h + \sqrt{h^2 + 2M|d|}},
\end{align}
and
\begin{align}\label{eq: alpha Useful_ID}
        \frac{M}{2}|\alpha_{\mathrm{min}}|\alpha_{\mathrm{min}} = -d - h\alpha_{\mathrm{min}}.    
\end{align}
Note that \eqref{eq: closed form expression for phi} shows that $\phi$ is well-defined.
We first describe the perturbation of $\phi$, and how $P$ behaves near its minimum.
\begin{lemma}[Perturbation of $\phi$]\label{lemma: perturbation of phi}
    Let $d, d' \in \mathbb{R}$ have the same sign and $h, h' \geq 0$.
    Defining $S = \sqrt{h^2 + 2M|d|}$ and $S' = \sqrt{(h')^2 + 2M|d'|}$,
    \begin{equation}
        |\phi(d, h) - \phi(d', h')|
        \leq \frac{|h-h'|}{M} + \frac{2|d-d'|}{S + S'}.
    \end{equation}
\end{lemma}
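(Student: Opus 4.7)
The plan is to work directly from the closed-form expression \eqref{eq: closed form expression for phi}, $\phi(d,h) = \frac{\sign(d)}{M}(h - S)$, and reduce the lemma to a short algebraic manipulation. Because $d$ and $d'$ share a sign, I can factor out $\sign(d) = \sign(d')$ to obtain
\[
\phi(d,h) - \phi(d',h') \;=\; \frac{\sign(d)}{M}\bigl[(h-h') - (S-S')\bigr],
\]
so that $|\phi(d,h) - \phi(d',h')| = \tfrac{1}{M}\bigl|(h-h') - (S-S')\bigr|$. Everything thus reduces to a scalar bound on the quantity $|(h-h') - (S-S')|$.

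The key algebraic step is to rationalize the difference of square roots:
\[
S - S' \;=\; \frac{S^{2} - (S')^{2}}{S+S'} \;=\; \frac{(h-h')(h+h') + 2M(|d|-|d'|)}{S+S'}.
\]
Substituting this into $(h-h')-(S-S')$ and regrouping yields
\[
(h-h') - (S-S') \;=\; (h-h')\cdot\frac{S+S'-h-h'}{S+S'} \;-\; \frac{2M(|d|-|d'|)}{S+S'}.
\]
Because $h, h' \geq 0$, we have $S \geq h$ and $S' \geq h'$ directly from the definitions of $S$ and $S'$, so the ratio $(S+S'-h-h')/(S+S')$ lies in $[0,1]$. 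Furthermore, since $d$ and $d'$ share a sign, $\bigl||d|-|d'|\bigr| = |d-d'|$. Applying the triangle inequality, dropping the factor in $[0,1]$ on the first term, and dividing through by $M$ produces exactly the claimed bound $\tfrac{|h-h'|}{M} + \tfrac{2|d-d'|}{S+S'}$.

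The only real thing to watch is the sign bookkeeping. Once $\sign(d)=\sign(d')$ is factored out and $\bigl||d|-|d'|\bigr| = |d-d'|$ is noted, the two cases $d,d'>0$ and $d,d'<0$ collapse into a single argument, and the degenerate $d = d' = 0$ case is trivial since then $\phi(0,h) = \phi(0,h') = 0$. I do not anticipate a substantive technical obstacle: the lemma is essentially a direct consequence of the explicit formula \eqref{eq: closed form expression for phi} together with the conjugate identity for $S-S'$, and the bound's two summands arise naturally from the two terms in the regrouped expression above.
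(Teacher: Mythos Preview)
Your proposal is correct and follows essentially the same route as the paper's own proof: both start from the closed form $\phi(d,h)=\tfrac{\sign(d)}{M}(h-S)$, rationalize $S-S'$ via the conjugate identity, regroup to pull out the factor $\tfrac{S+S'-h-h'}{S+S'}\in[0,1]$, and use $\bigl||d|-|d'|\bigr|=|d-d'|$ together with the triangle inequality to arrive at the bound.
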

\begin{proof}
    Because $d$ and $d^{\prime}$ have the same sign, from \eqref{eq: closed form expression for phi}, we obtain that $\phi(d,h)$ and $\phi(d', h')$ have the same sign and so $|\phi(d,h) -\phi(d', h')| = \frac{1}{M}|S - S' - (h - h')|$, whence
    \begin{align*}
        |\phi(d,h) -\phi(d', h')| &= \frac{1}{M}\left|\left(S - S'\right)\frac{S + S'}{S + S'} - (h - h')\right| = \frac{1}{M}\left|\frac{S^2 - (S')^2}{S + S'} - (h - h')\right|\\
        & = \frac{1}{M} \left|
        \frac{(h-h')(h+h')}{S+S'} + \frac{2M(|d|-|d'|)}{S+S'} - (h-h')
        \right|\\
        & \leq  \frac{1}{M} \left(1 - \frac{h+h'}{S+S'}\right)|h-h'| + \frac{2|d-d'|}{S+S'} 
        ~~ \leq \frac{|h-h'|}{M} + \frac{2|d-d'|}{S+S'},
    \end{align*}
    where the last inequality comes from that $0 \leq h + h' \leq S + S'$. 
\end{proof}
We now analyze the effect of perturbations to $\alpha_{\mathrm{min}}$ on $P(\alpha)$, under the assumption that the perturbed value of $\alpha$ has the same sign as $\alpha_{\mathrm{min}}$.
\begin{lemma}[Perturbation of $P(\alpha)$ near minimum]
\label{lemma: perturbation of P near min}
    Let $d \in \mathbb{R}$ and $h \geq 0$. Define $\alpha_{\mathrm{min}} = \phi(d, h)$, and let $\alpha^{\prime} \in \mathbb{R}$ have $\sign(\alpha') = \sign(\alpha_{\mathrm{min}})$. Then
    \begin{equation}\label{eq: Perturbation of P near min}
       0 \leq P(\alpha';d,h) - P(\alpha_{\mathrm{min}};d,h)
        \leq \frac{1}{2}(\alpha_{\mathrm{min}} - \alpha')^2 ( h + M|\alpha_{\mathrm{min}}| + \frac{M}{3}|\alpha_{\mathrm{min}} - \alpha'|).
    \end{equation}
\end{lemma}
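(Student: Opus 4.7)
The plan is to split the proof into the immediate lower bound (by convexity of $P$ and the fact that $\alpha_{\mathrm{min}}$ is its global minimizer) and a more substantive upper bound obtained from a third-order Taylor expansion on the half-line containing both $\alpha_{\mathrm{min}}$ and $\alpha'$. The sign assumption is crucial: it confines both points to a single closed half-line where the kink of $|\alpha|^3$ at the origin is invisible, turning $P$ into a smooth cubic.

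For the lower bound, I would simply observe that $h \geq 0$ together with the convexity of $\alpha \mapsto |\alpha|^3$ makes $P(\,\cdot\,;d,h)$ convex on $\mathbb{R}$, and $\alpha_{\mathrm{min}} = \phi(d,h)$ is by definition its global minimizer; hence $P(\alpha';d,h) - P(\alpha_{\mathrm{min}};d,h) \geq 0$.

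For the upper bound, let $s := \sign(\alpha_{\mathrm{min}})$ and introduce the smooth cubic
\begin{equation*}
   Q(\alpha) := d\alpha + \tfrac{1}{2}h\alpha^2 + \tfrac{Ms}{6}\alpha^3.
\end{equation*}
By the sign hypothesis, the closed interval between $\alpha_{\mathrm{min}}$ and $\alpha'$ lies entirely in $\{\alpha : s\alpha \geq 0\}$, where $s\alpha^3 = |\alpha|^3$; consequently $P$ and $Q$ agree at both endpoints, and in fact on the whole interval. A direct computation gives $Q''(\alpha) = h + Ms\alpha$, so $Q''(\alpha_{\mathrm{min}}) = h + M|\alpha_{\mathrm{min}}|$, and $Q'''(\alpha) \equiv Ms$. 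The identity \eqref{eq: alpha Useful_ID} rewrites as $d + h\alpha_{\mathrm{min}} + \tfrac{Ms}{2}\alpha_{\mathrm{min}}^2 = 0$, i.e.\ $Q'(\alpha_{\mathrm{min}}) = 0$. Applying Taylor's theorem to $Q$ at $\alpha_{\mathrm{min}}$ with the Lagrange remainder, there is some $\xi$ between $\alpha_{\mathrm{min}}$ and $\alpha'$ with
\begin{equation*}
   Q(\alpha') - Q(\alpha_{\mathrm{min}}) = \tfrac{1}{2}Q''(\alpha_{\mathrm{min}})(\alpha' - \alpha_{\mathrm{min}})^2 + \tfrac{1}{6}Q'''(\xi)(\alpha' - \alpha_{\mathrm{min}})^3.
\end{equation*}
Taking absolute values, using $|Q'''(\xi)| = M$, and factoring $\tfrac{1}{2}(\alpha' - \alpha_{\mathrm{min}})^2$ out of the two terms reproduces the claimed bound exactly.

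The only subtlety is the degenerate case $\alpha_{\mathrm{min}} = 0$, which from the closed form \eqref{eq: closed form expression for phi} forces $d = 0$; under the convention $\sign(0) = 0$ the sign hypothesis then collapses $\alpha'$ to $0$ as well, so both sides of \eqref{eq: Perturbation of P near min} vanish trivially. Apart from flagging this corner case, I do not anticipate a genuine obstacle: the whole argument reduces to recognizing that on one side of the origin the only nonsmoothness disappears, reducing the claim to a textbook Taylor estimate for a cubic polynomial.
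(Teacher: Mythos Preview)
Your proof is correct and follows essentially the same route as the paper's: both reduce to the exact cubic expansion of $P(\alpha') - P(\alpha_{\mathrm{min}})$ on the half-line where $|\alpha|^3 = \sigma\alpha^3$, then bound the cubic term. The paper carries out the factoring by hand and arrives at the intermediate exact equality
\[
P(\alpha') - P(\alpha_{\mathrm{min}}) = \tfrac{1}{2}(\alpha' - \alpha_{\mathrm{min}})^2\bigl(h + \tfrac{M}{3}|\alpha' + 2\alpha_{\mathrm{min}}|\bigr),
\]
whereas you reach the equivalent expression via the (exact) third-order Taylor identity for the cubic $Q$. Two minor remarks: the Lagrange remainder is superfluous here since $Q$ is a cubic, so the degree-three Taylor polynomial already equals $Q$; and ``taking absolute values'' should really be phrased as bounding the signed cubic term $\tfrac{Ms}{6}(\alpha'-\alpha_{\mathrm{min}})^3$ above by $\tfrac{M}{6}|\alpha'-\alpha_{\mathrm{min}}|^3$, which is all you need for the upper inequality.
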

\begin{proof}
    Let $\sigma = \sign(\alpha_{\mathrm{min}}) = \sign(\alpha')$. We write $P(\alpha_{\mathrm{min}})$, resp. $P(\alpha)$, for $P(\alpha_{\mathrm{min}};d,h)$, resp. $P(\alpha';d,h)$. Then,
    \begin{align*}
        &\quad~P(\alpha') - P(\alpha_{\mathrm{min}}) \\
        & = d (\alpha' - \alpha_{\mathrm{min}}) + \frac{h}{2}(\alpha' - \alpha_{\mathrm{min}})(\alpha' + \alpha_{\mathrm{min}}) + \frac{\sigma M}{6}(\alpha' - \alpha_{\mathrm{min}})((\alpha')^2 + \alpha_{\mathrm{min}}^2 + \alpha_{\mathrm{min}}\alpha') \\
        & = (\alpha' - \alpha_{\mathrm{min}})\left(d + \frac{h}{2}(\alpha' + \alpha_{\mathrm{min}}) + \frac{\sigma M}{6} ((\alpha')^2 + \alpha_{\mathrm{min}}^2 + \alpha_{\mathrm{min}} \alpha')\right).
    \end{align*}
    Using \eqref{eq: alpha Useful_ID}, we get
    \begin{align*}
        P(\alpha') - P(\alpha_{\mathrm{min}}) & = (\alpha' - \alpha_{\mathrm{min}})\left(
        \frac{h}{2}(\alpha' - \alpha_{\mathrm{min}}) + \frac{\sigma M}{6}((\alpha')^2 - 2\alpha_{\mathrm{min}}^2 + \alpha_{\mathrm{min}} \alpha')
        \right) \\
        & = \frac{1}{2}(\alpha'- \alpha_{\mathrm{min}})^2 \left(h + \frac{M}{3}|\alpha' + 2\alpha_{\mathrm{min}}| \right) \\
        &\leq \frac{1}{2}(\alpha' - \alpha_{\mathrm{min}})^2 \left(h + M|\alpha_{\mathrm{min}}| + \frac{M}{3}|\alpha' - \alpha_{\mathrm{min}}| \right).
    \end{align*}
    Noting that $P(\alpha') - P(\alpha_{\mathrm{min}})\geq 0$ as $\alpha_{\mathrm{min}}$ minimizes $P(\alpha)$ we obtain the desired statement.
\end{proof}

From \eqref{eq: closed form expression for phi} we see that if $\sign(d_{r_k}) = \sign(d_0)$ then $\sign(\hat{\alpha}_k) = \sign(\alpha^{+}_k)$, whence we may use the perturbation bounds of Lemmas \ref{lemma: perturbation of phi} and \ref{lemma: perturbation of P near min}. If $\sign(d_{r_k}) = -\sign(d_0)$ then $\sign(\hat{\alpha}_k) = \sign(\alpha^{-}_k)$ and the conclusions of Lemmas \ref{lemma: perturbation of phi} and \ref{lemma: perturbation of P near min} still apply. We conclude that at least one of $\alpha_k^+$ and $\alpha_k^-$ is a good approximation for $\hat{\alpha}_k$, and formalize this as Lemma~\ref{cor: P finite difference error bound}.
\begin{proof}[Proof of Lemma~\ref{cor: P finite difference error bound}]
    First, assume that $\sign(d_0) = \sign(d_{r_k})$, 
    so $\sign(\hat{\alpha}_k) = \sign(\alpha_k^{+})$ by \eqref{eq: closed form expression for phi}. Thus, by Lemma~\ref{lemma: perturbation of P near min},
    \begin{equation}\label{eq: P bound, to be used}
        |P(\hat{\alpha}_k) - P(\alpha_k^+)| \leq 
        \frac{1}{2}(\alpha_k^+ - \hat{\alpha}_k)^2\left(h_0 + M |\hat{\alpha_k}| + \frac{M}{3}|\alpha_k^+ - \hat{\alpha}_k|\right).
    \end{equation}
    Since $h_0 = u_k ^{\top} H_k u_k \leq L$, it only remains to find appropriate bounds for $|\alpha_k^+ - \hat{\alpha}_k|$ and $\hat{\alpha}_k$. For notational convenience, define $S_r := \sqrt{h_r^2 + 2M|d_r|}$ for $r\geq 0$. As $f$ is convex we know that $|d_0| \leq \|g_k\| \leq \sqrt{2L(f(x_k)-f_\star)}$, see \cite[Prop. B.3]{bertsekas1997nonlinear} and so
    \begin{align*}
        M|\hat{\alpha}_k|
        & \stackrel{\eqref{eq: closed form expression for phi}} = |S_0 - h_0| \leq \sqrt{S_0^2 - h_0^2}
        = \sqrt{2M|d_0|}
        \leq \sqrt{2M\|g_k\|} \\
        &\leq \sqrt{2M\sqrt{2L(f(x_k)-f_\star)}} 
         = \sqrt{\frac{2}{R^3}\left(MR^3\right)\sqrt{\frac{2}{R^2}(LR^2)(f(x_k)-f_\star)}} 
        \leq \frac{2^{3/4} B}{R^2},
    \end{align*}
    using the definition of $B = \max(LR^2, MR^3, f(x_k)-f_\star)$. Defining the finite difference errors $e_k^d = d_{r_k} - d_{0} $ and $e_k^h = h_{r_k} - h_{0}$,
    Lemma~\ref{lemma: perturbation of phi} implies
    \begin{equation}
        |\hat{\alpha}_k - \alpha_k^+| \leq \frac{|e_k^h|}{M} + \frac{2|e_k^d|}{S_0 + S_{r_k}}.
        \label{eq:Wednesday_1}
    \end{equation}
    As $\|u_k\|=1$ and $H$ is assumed Lipschitz continuous ({\em i.e.} $a=1$), from \eqref{eq: h_mu - b bound}, we have $|e_k^h| \leq \frac{Mr_k}{3}$ and the first term on the right-hand side of \eqref{eq:Wednesday_1} is bounded by $\frac{r_k}{3}$. Appealing to \eqref{eq: d_mu - a bound} we obtain $|e_k^d| \leq \frac{Mr_k^2}{6}$. We use this and the fact that $\sign(d_0) = \sign(d_k)$ to bound the second term on the right-hand side of \eqref{eq:Wednesday_1}:
    \begin{align*}
        \frac{2|e_k^d|}{S_0 + S_{r_k}} = \frac{2 |d_k - d_0|}{S_{0}+S_{r_k}}
        & \leq \frac{2\left(\sqrt{|d_0|} + \sqrt{|d_{k}|}\right) \left|\sqrt{|d_0|}-\sqrt{|d_{k}|}\right|}{\sqrt{2M|d_0|} + \sqrt{2M|d_{k}|}} \\
        &= \frac{2\left|\sqrt{|d_0|}-\sqrt{|d_{k}|}\right|}{\sqrt{2M}} \leq 
        \frac{2\, \sqrt{|d_0 - d_{k}|}}{\sqrt{2M}}  
         \leq 2\,\,\sqrt{\frac{1}{2M}\frac{Mr_k^2}{6}} 
         = \frac{r_k}{\sqrt{3}} .
    \end{align*}
    This provides a nice bound independent of $L$, $M$, and $R$; $ |\hat{\alpha}_k - \hat{\alpha}_k| \leq (1/3 + 1/\sqrt{3}) r_k < r_k.$  Combining everything with \eqref{eq: P bound, to be used}, we get
    \begin{align*}
        \left| P_0(\hat{\alpha}_k) - P_0(\alpha^{+}_k) \right| 
        & < \frac{1}{2}r_k^2\left( L + \frac{2^{3/4} B}{R^2} + \frac{M}{3}r_k\right) 
        ~~\leq \frac{1}{2}r_k^2\left(\frac{B}{R^2} + \frac{2^{3/4} B}{R^2} + \frac{B}{3R^2} \right) \\
        & \leq \frac{Br_k^2}{R^2}\left(\frac{2}{3} + \frac{1}{2^{1/4}}\right) 
        ~~\leq \frac{2B}{R^2}r_k^2.
    \end{align*}
    If $\sign(d_0) = -\sign(d_{r_k})$ then $\sign(\hat{\alpha}_k) = \sign(\alpha^{-}_k)$, again by \eqref{eq: closed form expression for phi}. Lemma~\ref{lemma: perturbation of phi} and Lemma~\ref{lemma: perturbation of P near min} now yield
    \begin{align}
        & |\hat{\alpha}_k - \alpha_k^-| \leq \frac{|e_k^h|}{M} + \frac{2|d_0 - (-d_{r_k})|}{S_0 + S_{r_k}} \label{eq:Wednesday_2} \\
        & |P(\hat{\alpha}_k) - P(\alpha_k^-)| \leq 
        \frac{1}{2}(\alpha_k^- - \hat{\alpha}_k)^2\left(h_0 + M |\hat{\alpha_k}| + \frac{M}{3}|\alpha_k^- - \hat{\alpha}_k|\right). \nonumber
    \end{align}
    The first term in \eqref{eq:Wednesday_2} can be bounded as before. Because $|d_0 + d_{r_k}| \leq |d_0 - d_{r_k}|\leq |e^d_k|$ as $d_0$ and $d_{r_k}$ have opposite signs, the second term in \eqref{eq:Wednesday_2} is bounded by $r_{k}/\sqrt{3}$ as before. Following the proof of the $\sign(d_0) = \sign(d_{r_k})$ case we conclude that,
    \begin{equation*}
        \left| P_0(\hat{\alpha}_k) - P_0(\alpha^{-}_k) \right| \leq \frac{2B}{R^2}r_k^2,
    \end{equation*}
    thus proving the theorem.  
\end{proof}

\begin{proof}[Proof of Theorem~\ref{thm: Expected descent of CARS-CR}]
    First, fix $u_k \in \mathbb{R}^d$ drawn from $\mathcal{D}_k$. Then, for $\sigma = -\sign(d_0(x_k;u_k))$ and any $z \in \mathbb{R}^d$,
    \begin{align*}
        &\quad~f(x_{k+1}) - f(x_k) \\
        &\leq  f(x_k + \alpha_k^{\sigma} u) -f(x_k) 
        ~~ \leq P(\alpha_k^{\sigma}; d_0(x_k; u_k), h_0(x_k; u_k))  & \text{\small (Eq.~\eqref{eq: cubic regularization bound})}\\
        & \leq P(\hat{\alpha}_k; d_0, h_0) + \frac{2B}{R^2}r_k^2 & \text{(\small Lemma~\ref{cor: P finite difference error bound})}\\
        & \leq P(u_k^{\top}z; d_0, h_0) + \frac{2B}{R^2}r_k^2  & \text{\small (minimality of $\hat{\alpha}_k$)}\\
        & = (z^{\top}u_k)(u_k^{\top}g_k) + \frac{1}{2}(z^{\top}u_k)(u_k^{\top}H_k u_k)(u_k^{\top}z) + \frac{M}{6}|u_k^{\top}z|^3 + \frac{2B}{R^2}r_k^2 &~
    \end{align*}
    holds. 
    Now taking the expectation and using the isotropy condition:
    \begin{align*}
        \mathbb{E}\left[ f(x_{k+1}) \mid x_k \right] - f(x_k) 
        & \leq \frac{1}{d}z^{\top}g_k +
        \frac{1}{2} z^{\top}\mathbb{E} \left[ u_k u_k^{\top} H_k u_k u_k^{\top} \right]z + \frac{M}{6}\mathbb{E}\left[|u_k^{\top} z |^3\right] + \frac{2B}{R^2}r_k^2.
    \end{align*}
    Note that the expectations above satisfy
    $ \frac{1}{2} z^{\top}\mathbb{E} \left[ u_k u_k^{\top} H_k u_k u_k^{\top} \right]z \leq \frac{1}{2}z^{\top}\mathbb{E}\left[Lu_k u_k^{\top}\right]z = \frac{L}{2d}\|z\|^2$
    and
    $
        \mathbb{E}\left[|u_k^{\top} z |^3\right]
         \leq \mathbb{E}\left[|u_k^{\top} z |^2\right]\|z\| = \frac{1}{d}\|z\|^3,
    $
    respectively.
    Therefore, 
    \begin{align}\label{eq: CR expected descent in z involving g and H}
        \mathbb{E}\left[ f(x_{k+1}) \mid x_k \right] - f(x_k) 
        & \leq \frac{1}{d}z^{\top}g_k
        + \frac{L}{2d}\|z\|^2 + \frac{M}{6d}\|z\|^3 + \frac{2B}{R^2}r_k^2.
    \end{align}
    Finally, using convexity of $f$, namely $f(x_k + z)-f(x_k) \geq z^{\top}g_k$, we obtain \eqref{eq: CR expected bound wrt z}.
\end{proof}

\begin{proof}[Proof of Theorem~\ref{thm: Convergence of CARS-CR}]
Let $\delta(x)$ denote the optimality gap $f(x) - f_\star$, and $\delta_k := \mathbb{E}[\delta(x_k)]$. Since Algorithm~\ref{alg:CARS-CR} has non-increasing $\delta_k$, we may assume $\delta_0 > \varepsilon$. Note that $\delta$ is convex. Letting $x_{\star}$ be any fixed minimizer ({\em i.e.} $f(x_{\star}) = f_{\star}$), we note that $\delta(x_\star) = 0$. For any $t_k \in (0, 1)$,  setting $z = t_k(x_\star - x_k)$ in Theorem~\ref{thm: Expected descent of CARS-CR} and defining $\Delta_k = \|x_\star - x_k\|$ yields
\begin{align*}
    &\quad~\mathbb{E}\left[f(x_{k+1}) \,|\, x_k \right] - f_{\star} \cr
    &\leq  (1-\frac{1}{d})f(x_k) + \frac{1}{d}f((1-t_k)x_k + t_kx_{\star}) - f_{\star} 
     + \frac{L}{2d} t_k^2 \Delta_k^2 + \frac{M}{6d} t_k^3 \Delta_k^3 + \frac{2B}{R^2} r_k^2 
\end{align*}
and
\begin{align}
     & \delta_{k+1} \leq (1-\frac{1}{d})f(x_k) +\frac{1-t_k}{d}f(x_k) + \frac{t_k}{d}f_{\star} - f_{\star}
     + \frac{L}{2d} t_k^2 \Delta_k^2 + \frac{M}{6d} t_k^3 \Delta_k^3 + \frac{2B}{R^2} r_k^2 \label{eq:Thm3-convexity} \\
     & \delta_{k+1} \leq (1 - \frac{1}{d} + \frac{1}{d} - \frac{t_k}{d})f(x_k) - (1 - \frac{t_k}{d})f_{\star} + \frac{L}{2d} t_k^2 \Delta_k^2 + \frac{M}{6d} t_k^3 \Delta_k^3 + \frac{2B}{R^2} r_k^2 \label{eq:March-14}\\
    & \delta_{k+1} \leq (1-\frac{t_k}{d})\delta_k +  \frac{L}{2d} t_k^2 \Delta_k^2 + \frac{M}{6d} t_k^3 \Delta_k^3 + \frac{2B}{R^2} r_k^2 \label{eq: CR conv before telescoping},
\end{align}
where in \eqref{eq:Thm3-convexity} we use the convexity of $f$, in \eqref{eq:March-14} we use $f(x_{\star}) = f_{\star}$,  and in \eqref{eq: CR conv before telescoping} we use the definition of $\delta_k$. We adopt an auxiliary sequence $\{\beta_k\}$ to make \eqref{eq: CR conv before telescoping} telescoping.
Let $s>1$, and define $\gamma_k = k^s$ and $\beta_k = \beta_0 + \sum_{j=1}^{k}\gamma_j$ with $\beta_0 = s^s d^{s+1}/(s+1)$, then $t_k = d\frac{\gamma_{k+1}}{\beta_{k+1}} \in (0, 1)$, and $1 - \frac{t_k}{d} = \frac{\beta_k}{\beta_{k+1}}$. We further note that: 
\begin{equation}\label{eq: beta_k bounds}
    \frac{k^{s+1}}{s+1} \leq \beta_0 + \int_{1}^{k}\frac{1}{x^s} dx \leq \beta_k \leq \beta_0 + \int_{2}^{k+1}\frac{1}{x^s}dx = \beta_0 + \frac{(k+1)^{s+1}}{s+1}
\end{equation}
Then by multiplying $\beta_{k+1}$ on both sides of \eqref{eq: CR conv before telescoping}, we get
\begin{align*}
    \beta_{k+1}\delta_{k+1} \leq \beta_k \delta_k + \frac{Ld}{2}\frac{ \gamma_{k+1}^2}{\beta_{k+1}}\Delta_k^2
                + \frac{Md^2}{6} \frac{ \gamma_{k+1}^3}{\beta_{k+1}^2}\Delta_k^3
                + \frac{2B}{R^2} \beta_{k+1} r_k^2,
\end{align*}
and summing up from $k=0$ to $K-1$, we have
\begin{align}\label{eq: CR conv delta K bound as Sum}
    \delta_K \leq \frac{\beta_0}{\beta_K}\delta_0 + \frac{Ld}{2\beta_K} \sum_{k=1}^{K} \frac{\gamma_k^2}{\beta_k}\Delta_{k-1}^2
                + \frac{M d^2}{6\beta_K} \sum_{k=1}^{K} \frac{\gamma_k^3}{\beta_k^2}\Delta_{k-1}^3
                + \frac{2B}{R^2 \beta_K} \sum_{k=1}^{K} \beta_{k}r_{k-1}^2.
\end{align}
First, 
  $  \frac{\beta_0}{\beta_K} \leq \frac{\beta_0}{\beta_K-\beta_0} \leq \frac{s^s}{(K/d)^{s+1}} $. Because the sequence $f(x_k)$ is non-increasing, $x_k \in \mathcal{Q}$ for all $k\geq 0$ and so $\Delta_k \leq \mathcal{R}$ (see Definition~\ref{def:R-bounded}).
Using $(1+\frac{1}{K})^s \leq e^{s/K}$, 
\begin{align*}
    \frac{1}{\beta_K}\sum_{k=1}^{K}\frac{\gamma_k^2}{\beta_k}\Delta_{k-1}^2
    & \stackrel{\eqref{eq: beta_k bounds}}{\leq} \frac{\mathcal{R}^2(s+1)^2}{K^{s+1}} \sum_{k=1}^{K}\frac{k^{2s}}{k^{s+1}}
    ~~= \frac{\mathcal{R}^2(s+1)^2}{K^{s+1}} \sum_{k=1}^{K} k^{s-1} \\
    & \leq \frac{\mathcal{R}^2(s+1)^2}{K^{s+1}} \frac{(K+1)^s}{s} 
    ~~\leq \frac{\mathcal{R}^2e^{s/K} (s+1)^2}{sK}
\end{align*}
and 
\begin{align*}
    \frac{1}{\beta_K}\sum_{k=1}^{K}\frac{\gamma_k^3}{\beta_k^2}\Delta_{k-1}^3
    & \stackrel{\eqref{eq: beta_k bounds}}{\leq} \frac{\mathcal{R}^3(s+1)^3}{K^{s+1}} \sum_{k=1}^{K}\frac{k^{3s}}{k^{2s+2}}
    ~~= \frac{\mathcal{R}^3(s+1)^3}{K^{s+1}} \sum_{k=1}^{K} k^{s-2}\\
    &\leq \frac{\mathcal{R}^3(s+1)^3}{K^{s+1}} \frac{(K+1)^{s-1}}{s-1} ~~\leq \frac{\mathcal{R}^3e^{(s-1)/K} (s+1)^3}{(s-1)K^2}.
\end{align*}
Lastly, the error due to the finite difference is controlled by the sampling radius:  
\begin{align*}
    \frac{2B}{R^2 \beta_K}\sum_{k=1}^{K}\beta_{k} r_{k-1}^2
    & \stackrel{\eqref{eq: beta_k bounds}}{\leq} \frac{2(s+1) B\varepsilon \rho^2}{R^2 K^{s+1}} \sum_{k=1}^{K}\frac{(k+1)^{s}}{s+1} + \frac{2B\varepsilon \rho^2 \beta_0}{R^2 \beta_K}\sum_{k=1}^{K} \frac{1}{(k+1)} \\
    & \leq \frac{\varepsilon e^{2(s+1)/K}}{s+1} + \frac{\varepsilon \beta_0 \log(K+2)}{\beta_K}.
\end{align*}
Combining the above with $\varepsilon < \delta_0$ we get
\begin{align}\label{eq: CR conv delta_K bound by four terms}
    \delta_K &\leq \frac{ s^s \delta_0 (1+ \log(K+2)) }{(K/d)^{s+1}}
                + \frac{e^{s/K} (s+1)^2 L\mathcal{R}^2  }{2s(K/d)} \cr
                &\quad~ + \frac{e^{(s-1)/K} (s+1)^3 M \mathcal{R}^3  }{6(s-1)(K/d)^2}
                + \frac{e^{2(s+1)/K} }{s+1}\varepsilon .
\end{align}
When $K>s$, bounding the first three term in \eqref{eq: CR conv delta_K bound by four terms} by $\frac{s-1}{6(s+1)}$, and the last term by $\frac{s+3}{2(s+1)}$ gives the sufficient conditions on $K$:
\begin{equation*}
    \frac{K}{d} \geq \max\left\{
    \frac{3(s+1)^3 e}{s(s-1)}\frac{L\mathcal{R}^2}{\varepsilon},
    \frac{(s+1)^2 \sqrt{e}}{(s-1)} \sqrt{\frac{M\mathcal{R}^3}{\varepsilon}},
    s\left(\frac{6(s+1)}{s-1}\right)^{1/s} \left(\frac{\delta_0}{\varepsilon}\right)^{1/s}
    \right\}
\end{equation*}
and $K \geq \frac{2(s+1)}{\log(1+s/2)}$, respectively.
These immediately give \eqref{eq: CR conv K depending on p}. 
\end{proof}

\section{Experimental Results}
\label{sec:ExperimentalResults}
For a detailed description of all experimental settings and hyperparameters, see Appendix~\ref{appendix: Experiments}. The code for all the experiments can be found online at \url{https://github.com/bumsu-kim/CARS}.

\subsection{Convex Functions}
We compared the performance of CARS and CARS-CR to STP \cite{bergou2020stochastic}, SMTP \cite{gorbunov2019stochastic}, Nesterov-Spokoiny \cite{nesterov2017random}, SPSA \cite{spall1992multivariate}, 2SPSA \cite{spall2000adaptive}, and AdaDGS \cite{tran2020adadgs} 
on the following convex quartic function:
\begin{equation*}
    f(x) = \alpha\sum_{i=1}^{d} x_i^4 + \frac{1}{2}x^{\top}Ax + \beta\|x\|^2,
\end{equation*}
where $\alpha, \beta>0$ and $A = G^{\top}G$ with $G_{ij} \stackrel{i.i.d}{\sim} \mathcal{N}(0, 1)$ for $i, j = 1,2,\cdots,d$. We show in Figure~\ref{fig:Convex Quartic} the objective function value versus the number of function queries.

\begin{figure*}
    \centering
    \includegraphics[width=0.48\linewidth]{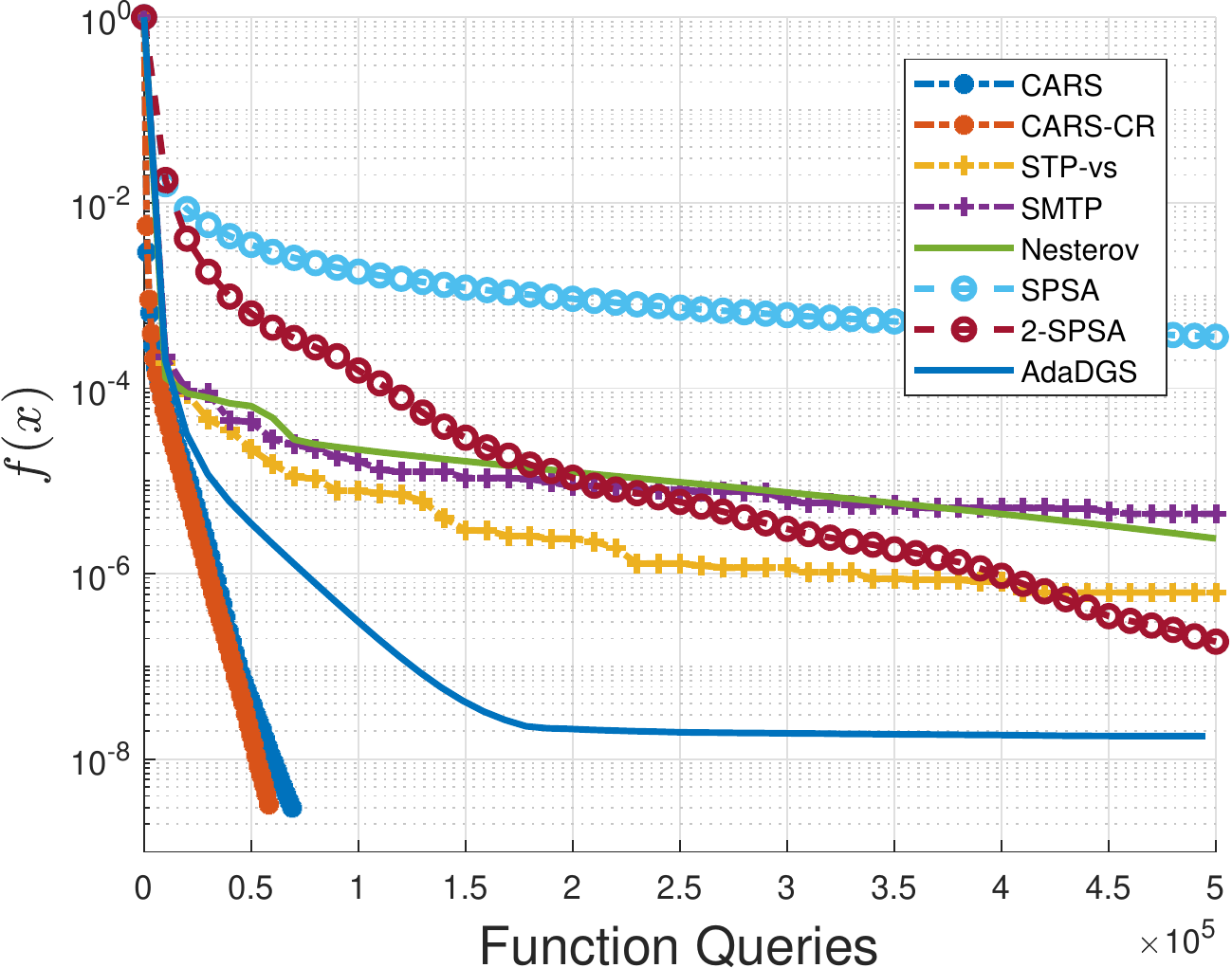}
    \caption{Performance of each algorithm on a convex quartic function $f(x) = 0.1\sum_{i=1}^{d} x_i^4 + \frac{1}{2}x^{\top}Ax + 0.01\|x\|^2$, where $A = G^{\top}G$ with $G_{ij} \stackrel{i.i.d}{\sim} \mathcal{N}(0, 1)$. The problem dimension $d = 30$.}
    \label{fig:Convex Quartic}
\end{figure*}

\subsection{Benchmark Problem Sets with Non-Convex Functions}
The test results in this section are presented in the form of performance profiles \cite{dolan2002benchmarking}, which is a commonly used tool for comparing the performance of multiple algorithms over a suite of test problems. Performance profiles tend to be more informative than single-dimensional summaries ({\em e.g.} average number of iterations required to solve a problem). Formally, consider fixed sets of problems $\mathcal{P}$ and algorithms $\mathcal{S}$. For each $p \in \mathcal{P}$ and $s \in \mathcal{S}$ the {\em performance ratio} $r_{p,s}$ is defined by 
\begin{equation*}
    r_{p,s} = \frac{t_{p,s}}{\min_{s'\in\mathcal{S}} t_{p,s'}},
\end{equation*}
where $t_{p,s}$ is the number of function queries required for $s$ to solve $p$. This is the relative performance of $s$ on $p$ compared to the best algorithm in $\mathcal{S}$ for $p$. The {\em performance profile} of $s$, $\rho_{s} : [1,\infty) \rightarrow [0,1]$ is defined as
\begin{align*}
    \rho_s(\tau) = \frac{|\{p \in \mathcal{P} : r_{p,s} \leq \tau \}|}{|\mathcal{P}|}.
\end{align*}
Therefore, $\rho_s(1)$ is the fraction of problems for which $s$ performs the best, while $\rho_s(\tau)$ for large $\tau$ measures the robustness of $s$. For all $\tau$, a {\em higher value of $\rho_{s}(\tau)$ is better}. We use a log-scale on the horizontal access when plotting $\rho_s(\tau)$.

\vspace{0.1in}
\noindent\textit{\textbf{Mor\'{e}-Garbow-Hillstrom Problems.}}\quad
We tested the same set of algorithms using the well-known non-convex Mor\'{e}-Garbow-Hillstrom 34 test problems \cite{more1981testing}.

For each target accuracy $\varepsilon$, a problem is considered solved when we have $f(x_k)-f_\star \leq \varepsilon(f(x_0)-f_{\star})$ within the budget of 20,000 queries. We used the recommended starting point $x_0$ as in \cite{more1981testing} for all the tested algorithm, and repeated each test 10 times. The results are presented in Figure~\ref{fig:More Garbow Hillstrom and CUTEst}.

\vspace{0.1in}
\noindent\textit{\textbf{CUTEst Problems.}}\quad
We further assessed the performance of CARS and CARS-CR to the same suite of algorithms on the CUTEst \cite{gould2015cutest} problem set, which contains various convex and non-convex problems. 
As before, we compared the methods using performance profiles for the 146 problems with dimension less than or equal to 50. The query budget for each problem was set to be $20,000$ times the problem dimension. The target accuracies were again set to $\varepsilon (f(x_0) - f_\star)$. The results are reported in Figure~\ref{fig:More Garbow Hillstrom and CUTEst}.

\begin{figure*}
    \centering
    \includegraphics[width=0.32\linewidth]{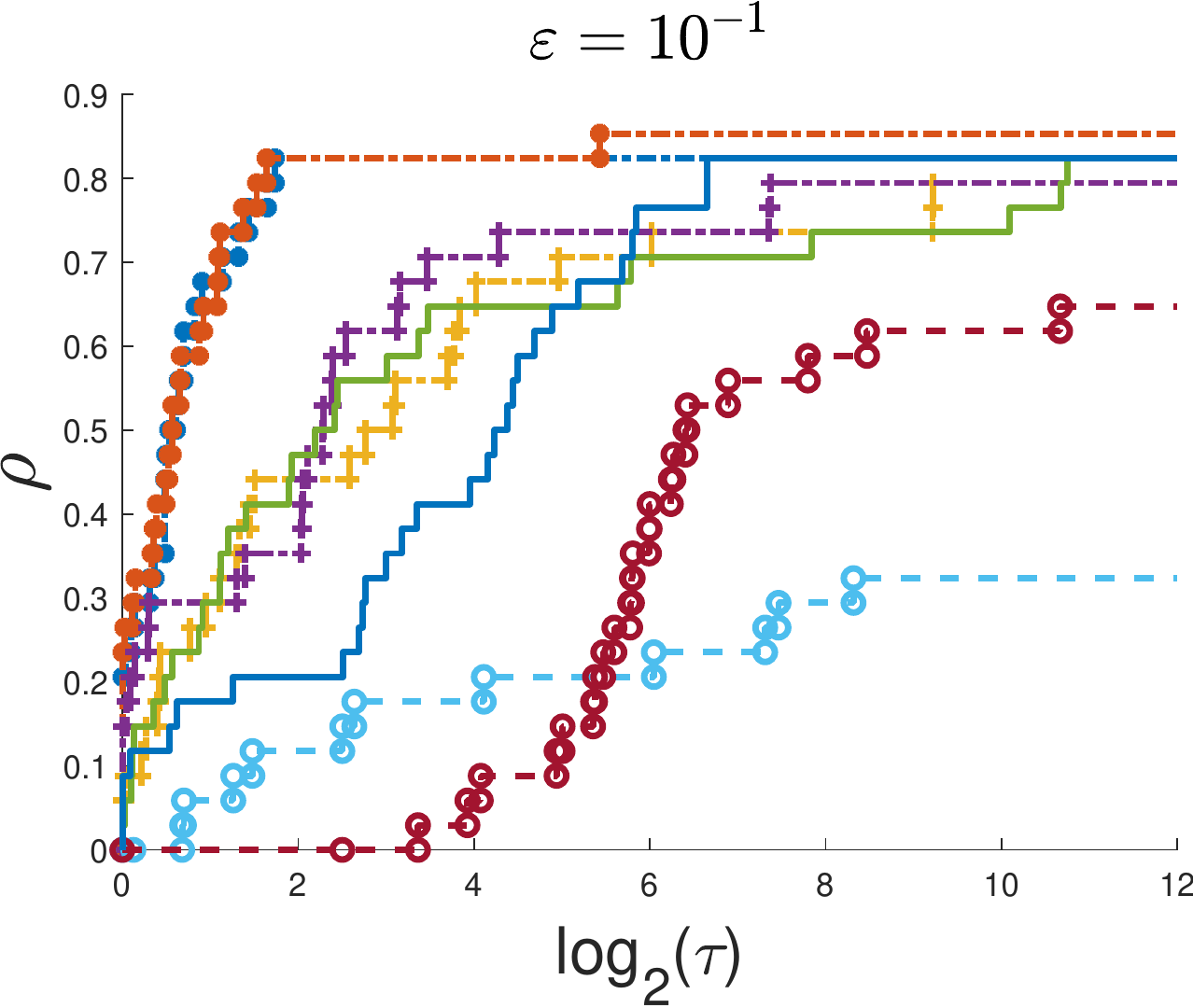}
    \includegraphics[width=0.32\linewidth]{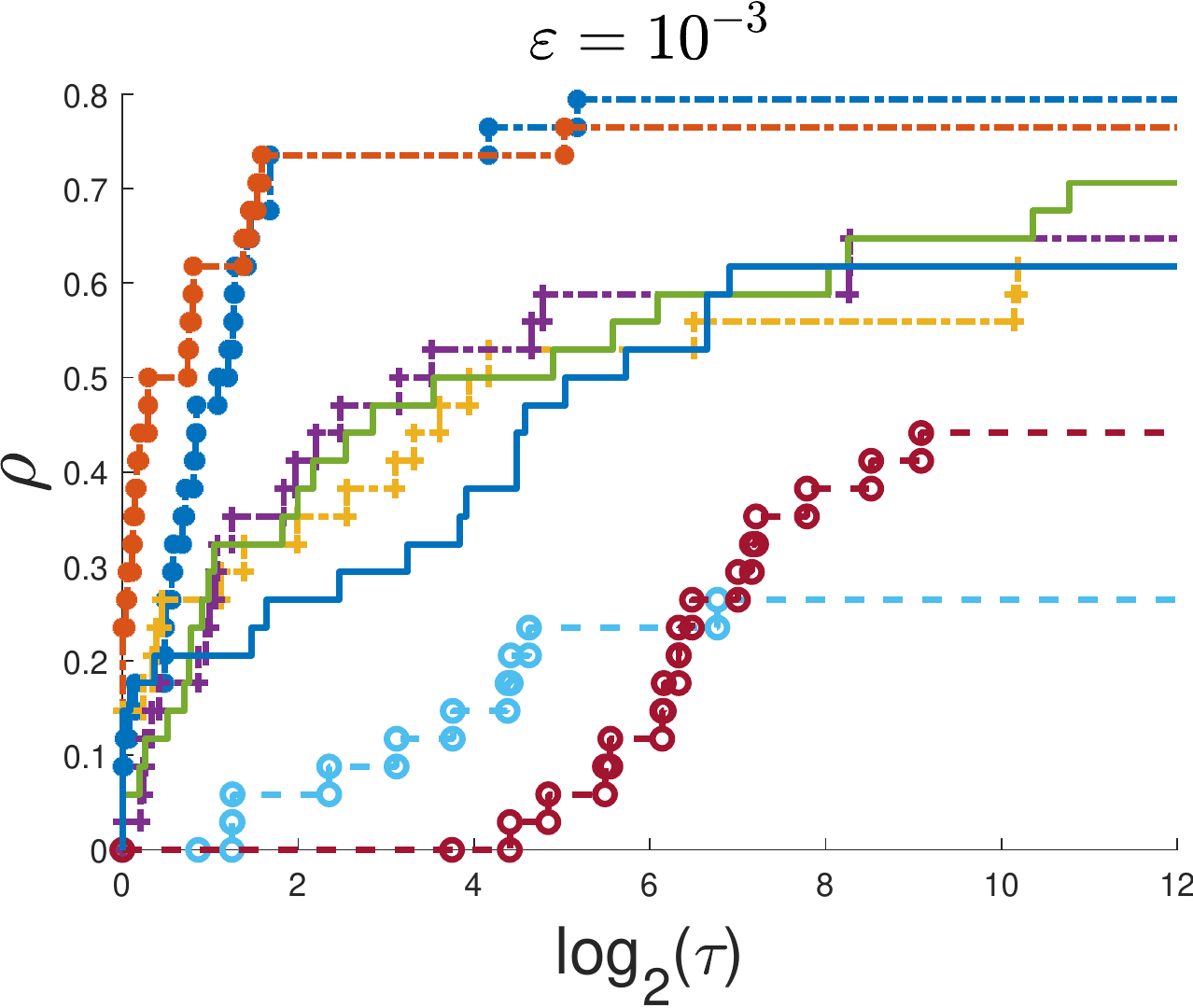}
    \includegraphics[width=0.32\linewidth]{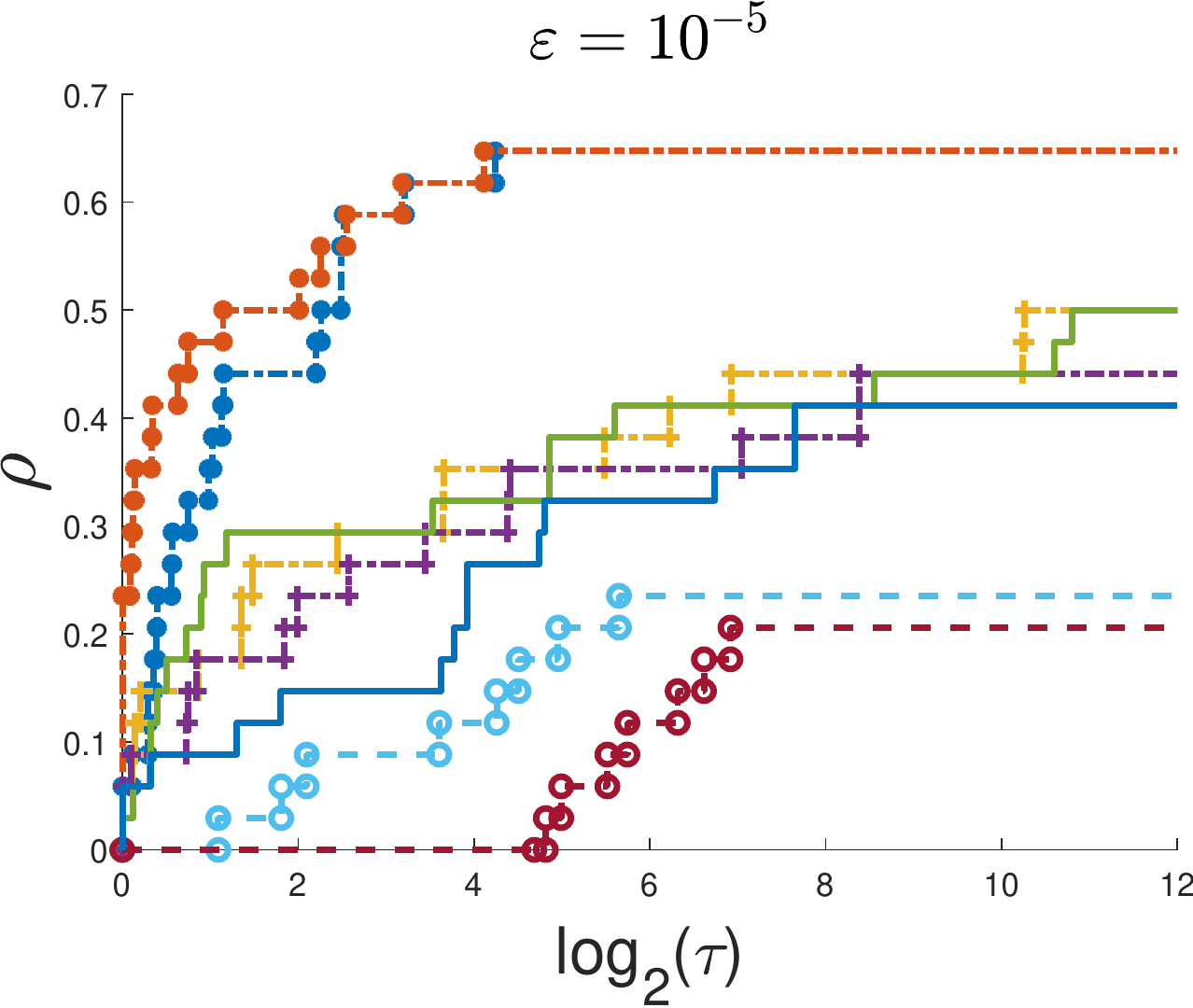}\\
    \vspace{1mm}
    {
      \setlength{\fboxsep}{0pt}
      \fbox{\includegraphics[width=0.8\linewidth]{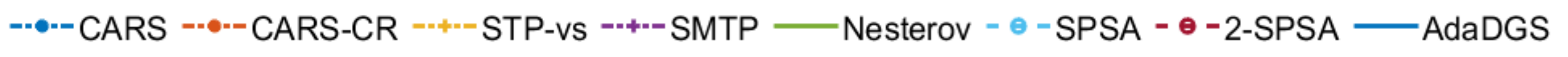}}
    }\\
    \vspace{1mm}
    \includegraphics[width=0.32\linewidth]{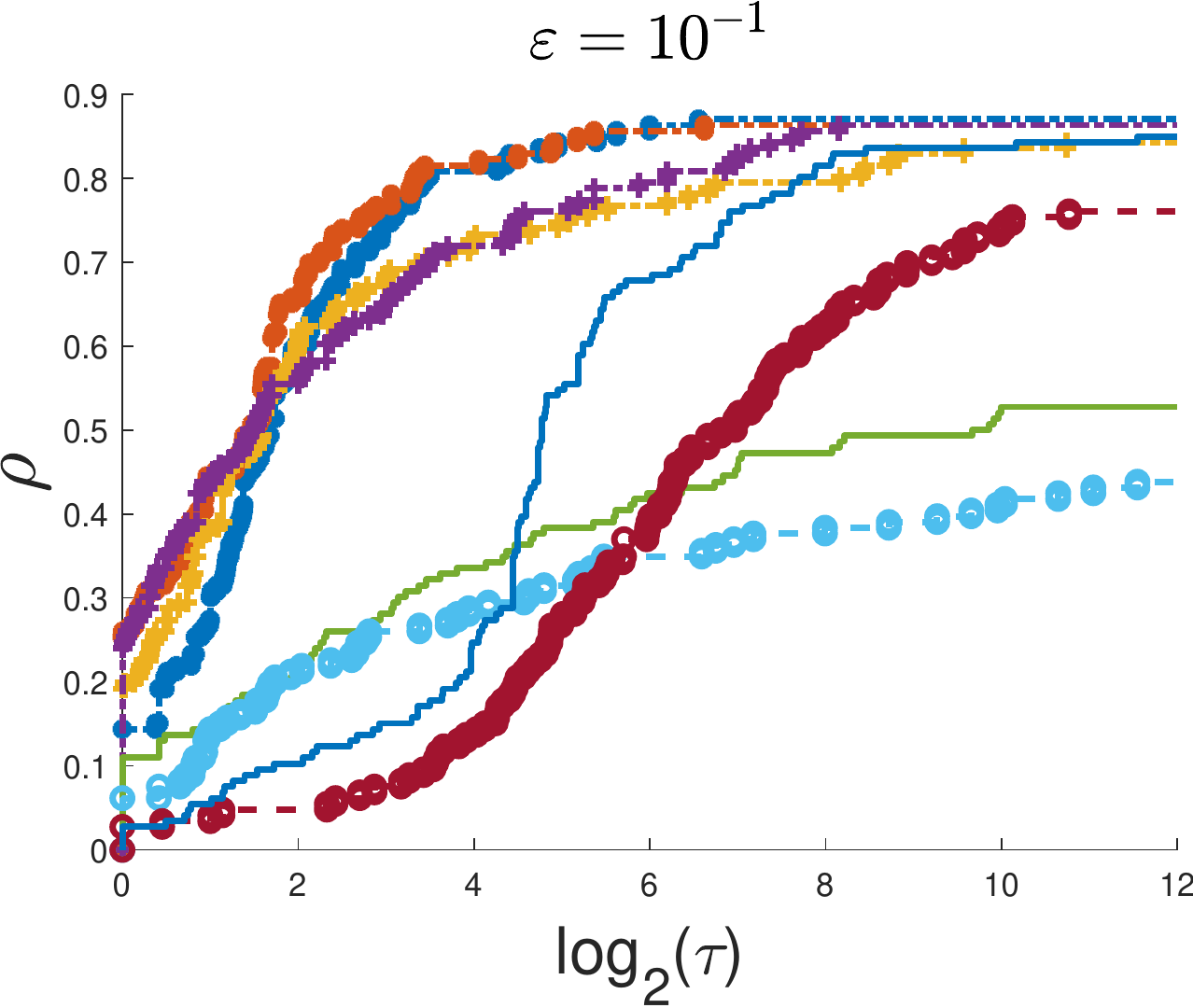}
    \includegraphics[width=0.32\linewidth]{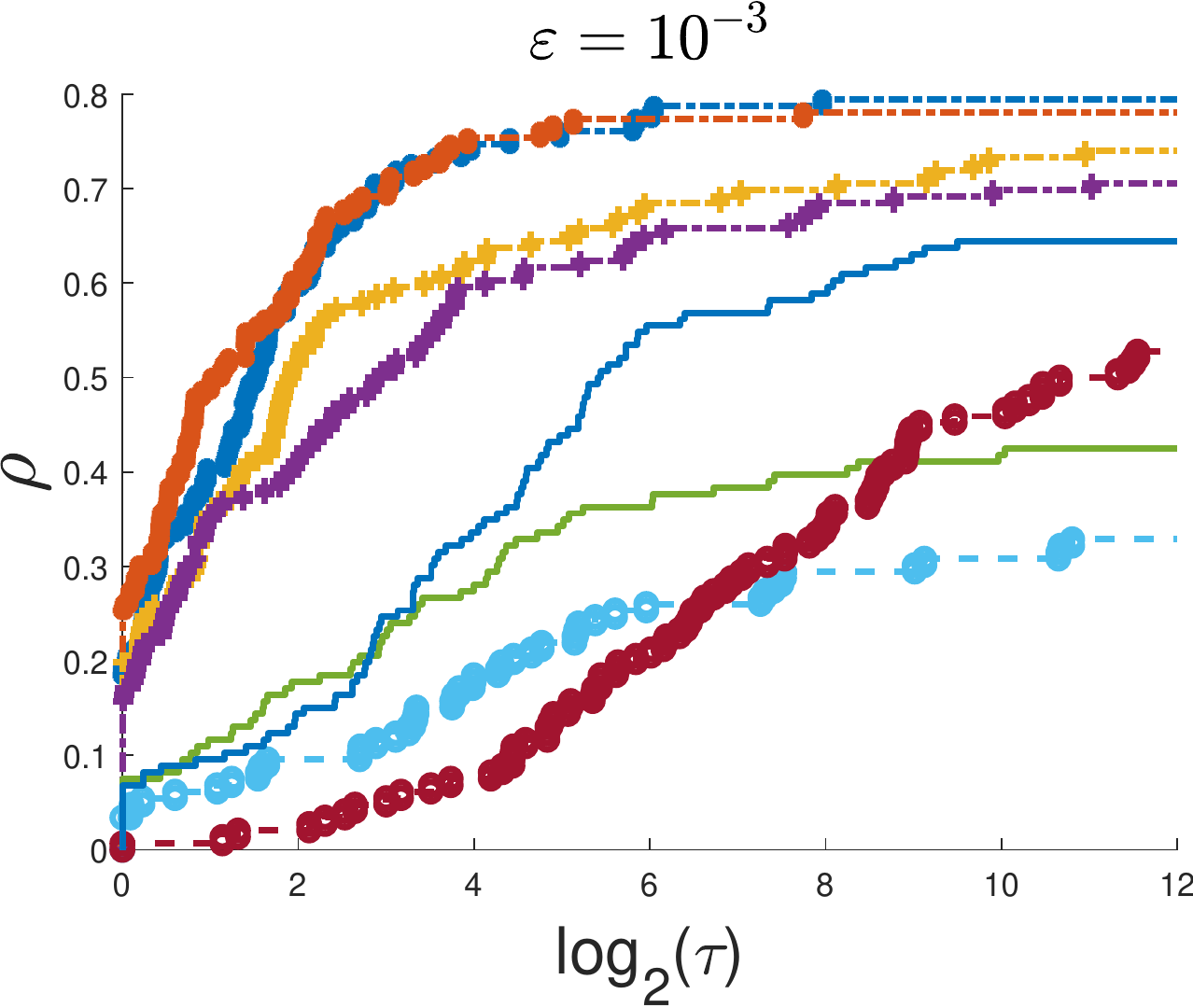}
    \includegraphics[width=0.32\linewidth]{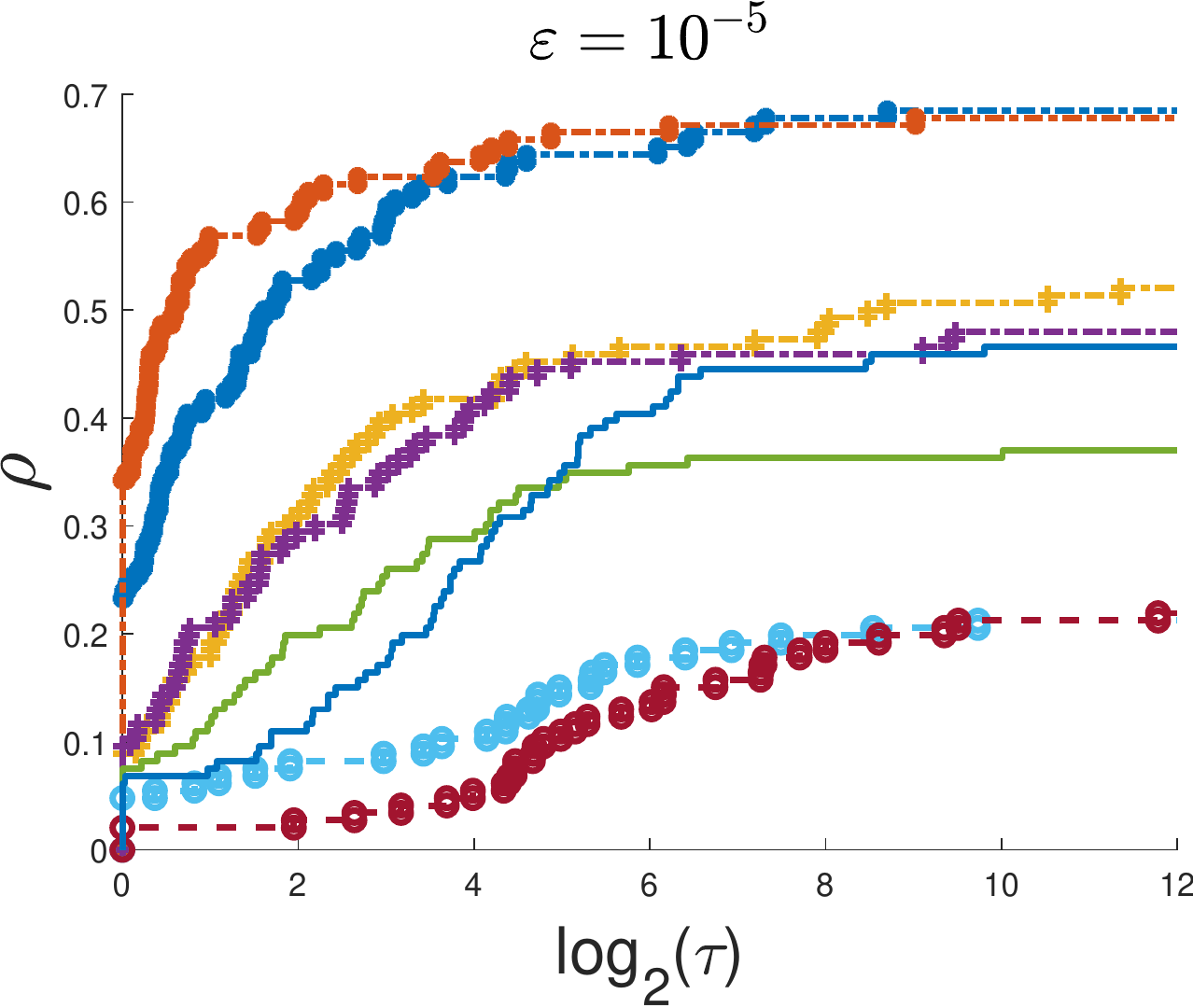}
    \caption{Performance profiles on Mor{\'e}-Garbow-Hillstrom problems (\textbf{upper}) and CUTEst problems (\textbf{lower}), for various target accuracies $\varepsilon = 10^{-1}$ (\textbf{left}), $10^{-3}$ (\textbf{middle}), and $10^{-5}$ (\textbf{right}). Our results demonstrate that CARS and CARS-CR consistently outperform other methods in terms of both efficiency ($\rho$ at low $\tau$ values) and robustness ($\rho$ at high $\tau$ values.) at all levels of accuracy.}\label{fig:More Garbow Hillstrom and CUTEst}
\end{figure*}

\subsection{Black-box Adversarial Attacks}
\begin{table}[t]
\def\ROWCOLOR{black!10!white}
    \begin{tabular}{l c c c}
    \toprule
       Algorithm & Success Rate (\%) & Median Queries & Average Queries \\
    \midrule
        \rowcolor{\ROWCOLOR}
        ZOO$^*$     & 93.95 & 11,700 & 11,804 \\
        PGD-NES$^*$ & 88.39 & 2,450 & 4,584 \\
        \rowcolor{\ROWCOLOR}
        ZOHA-Gauss$^*$ & 91.69 & 1,400 & 2,586 \\
        ZOHA-Diag$^*$ & 91.06 & 1,656  & 3,233 \\
        \rowcolor{\ROWCOLOR}
        STP & 53.64 & 2,193 & 3,141 \\
        SMTP & 65.68 & 1,415 & 2,250 \\
        \rowcolor{\ROWCOLOR}
        Nesterov & 67.72 & 1,105 & 2,044 \\
        Square Attack & {\bf 98.21} & 1,060 & 1,297 \\
        \rowcolor{\ROWCOLOR}
        CARS (Square) & 97.09 & {\bf 717} & {\bf 1,169} \\
    \bottomrule
    \end{tabular}
    \caption{Comparison of success rates, and median and average function queries for the successful black-box adversarial attacks on MNIST with $\ell_\infty$-perturbation bound 0.2. 
    CARS, equipped with the Square Attack's distribution, shows the best performance in successful attacks, while reaching the second best success rate. The results marked with $^*$ are cited from \cite{ye2018hessian}.
    }
    \label{table: Black Box Attack to a CNN model on the MNIST dataset}
\end{table}
Suppose $\mathcal{N}$ is an image classifier.
The problem of generating small perturbations $x$ that, when added to a natural image $x_{\mathrm{nat}}$, fool the classifier ({\em i.e.} $\mathcal{N}(x_{\mathrm{nat}} + x) \neq \mathcal{N}(x_{\mathrm{nat}})$) is known as finding an {\em adversarial attack} \cite{goodfellow2014explaining}. As described in \cite{chen2017zoo}, when no access to the internal workings of the classifier 
is available, this problem becomes a black-box, or derivative-free, optimization problem. In order to ensure the attacked image $x_{\mathrm{nat}} + x$ appears natural, a pixel-wise bound $\|x\|_{\infty} \leq \varepsilon_{\mathrm{atk}}$ is usually enforced. CARS showed state-of-the-art performance in generating black-box adversarial attacks for $\mathcal{N}$ trained on the MNIST digit classification dataset \cite{lecun2010mnist}. 

In our experiments, $\mathcal{N}$ is a two-layer CNN achieving $99\%$ test accuracy on unperturbed images. We use $\varepsilon_{\mathrm{atk}} = 0.2$ and consider all $10,000$ images from the test set of MNIST. We consider an attack a success if it fools $\mathcal{N}$ before a budget of $10,000$ queries is met. The success rates, median and average queries for successful attacks are shown in Table~\ref{table: Black Box Attack to a CNN model on the MNIST dataset}. 
The results from ZOO \cite{chen2017zoo}, PGD-NES \cite{ilyas2018black}, and ZOHA-type algorithms \cite{ye2018hessian} are cited from \cite{ye2018hessian}. 
As pointed out in Section~\ref{section:convergence of CARS}, the choice of sampling directions for CARS is not restrictive. Hence we used a similar initialization and distribution $\mathcal{D}$ as the Square Attack \cite{andriushchenko2020square}, which is known to be particularly well-suited for attacking CNN models.
Visualization of attacked images is partly shown in Figure~\ref{fig:MNIST_ATK_RES}. Detailed settings  can be found in Appendix~\ref{appendix: Experiments}. 

\begin{figure}
    \centering
    \includegraphics[width=0.8\linewidth]{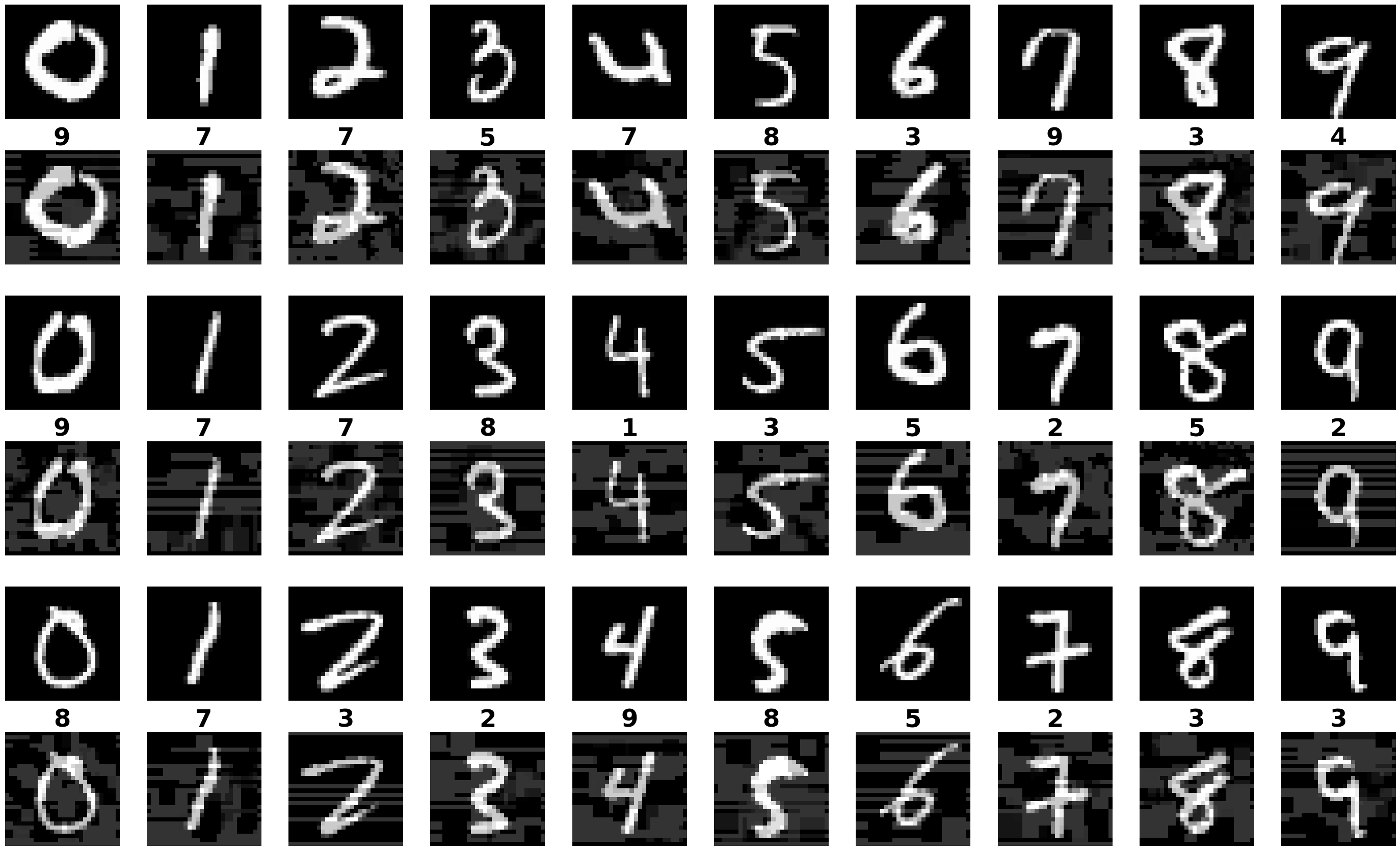}
    \caption{Adversarial examples with misclassified labels on MNIST generated with CARS.} 
    \label{fig:MNIST_ATK_RES}
\end{figure}

\section{Conclusion Remarks} \label{sec:conclusion}
We proposed two query-efficient and lightweight DFO algorithms: CARS and CARS-CR. Our analysis establishes their convergence on strongly convex functions and convex functions. Specifically, we develop a novel and rigorous analysis on the finite difference errors and the probability of significant descents of the objective function. CARS can incorporate various distributions, making it highly adaptable to a range of problem-specific distributions. We demonstrate the efficacy of CARS and CARS-CR through benchmark tests, where it outperforms existing methods in  minimizing non-convex functions as well. 

\section*{Declarations}
\begin{itemize}
\item Funding: The work of HanQin Cai is partially supported by NSF DMS 2304489.
\item Conflict of interest: The authors have no conflicts of interest to declare that are relevant to the content of this article.
\item Code availability: The software code of this paper can be accessed through https://github.com/bumsu-kim/CARS
\end{itemize}

\bibliography{realbib}

\appendix
\section{More on Numerical Experiments}\label{appendix: Experiments}
In this section, we list the hyperparameters we used for each experiment. The code for all experiments can be found in \url{https://github.com/bumsu-kim/CARS}. We ran experiments on two machines to distribute the load. A laptop equipped with Intel i5-9400F and Nvidia RTX 2060 and a workstation equipped with i9-9940X and two Nvidia RTX 2080 are used.

\vspace{0.1in}
\noindent\textit{\textbf{Mor\'{e}-Garbow-Hillstrom and CUTEst Problems.}}\quad
The Mor\'{e}-Garbow-Hillstrom Problem set consists of 34 non-convex smooth functions, where the problem dimension lies between 2 and 100. This experiment is conducted in Matlab. On the other hand, we used 146 unconstrained problems in the CUTEst Problem set, which have dimension not greater than 50. We used Julia for the CUTEst experiment.

We consider a problem solved when $f(x_k) - f_{\star} \leq \varepsilon(f(x_0) - f_\star)$. The target accuracies used here are $\varepsilon = 10^{-1}, 10^{-3}$ and $10^{-5}$.
For CARS, we used the sampling radius $r_k = 0.5/(k+2)$, $\hat{L} = 2$. For CARS-CR, we used the same sampling radius, and $M = 2$. 
For STP \cite{bergou2020stochastic} and Nesterov-Spokoiny \cite{nesterov2017random} we used the same hyperparameters as given in \cite[Section~8.1]{bergou2020stochastic}. We also used the same decreasing step-size for Stochastic Momentum Three Points method (SMTP) \cite{gorbunov2019stochastic}. For the momentum parameter $\beta$ for SMTP, we followed \cite{gorbunov2019stochastic} and used $\beta = 0.5$.
Namely, following the notations in \cite{bergou2020stochastic} and \cite{gorbunov2019stochastic},
$\mathcal{D} = \unif {(\mathbb{S}^{d-1})}$ and $\alpha_k = \frac{1}{\sqrt{k+1}}$ (STP), $\alpha_k = \frac{1}{4(n+4)}$ and $\mu_k = 10^{-4}$, (Nesterov-Spokoiny), and $\gamma_k = \frac{1}{\sqrt{k+1}}$ and $\beta = 0.5$ (SMTP).
For SPSA \cite{spall1992multivariate} and 2SPSA \cite{spall2000adaptive}, we used the Rademacher distribution ({\em i.e.} $(u_k)_i = \pm 1$ with probability 0.5) for $\mathcal{D}$,  $\alpha = 0.602$, $\gamma = 0.101$, $A = 100$, $a = 0.16$, and $c = 10^{-4}$.
For AdaDGS \cite{tran2020adadgs}, we used the code provided by the authors, by implementing the original Python code in Matlab. Some modifications on hyperparameters are made due to the difference in the scale of problem dimension, and the lack of domain width. First, the original AdaDGS code performs experiments on high dimensional problems  ({\em e.g.} $d=1000$), whereas $2\leq d \leq 100$ in this experiment. Also, the problems are unconstrained, and $\|x_0-x_{\star}\|$ varies from order of $10^0$ to $10^6$.
Thus we used the following modified hyperparameters (following the notation of \cite{tran2020adadgs}):
\begin{enumerate}
    \item The number of points used for line search $S = 100$, since the suggested value $0.05d(M-1)$ is too small for our experiments.
    \item The initial smoothing(sampling) radius $\sigma_0 = 10^{-2}$. We tested $\sigma_0 = 5, 1, 10^{-1}, 10^{-2}$ and $10^{-3}$, and chose the best value. When $\sigma_0 \leq 10^{-1}$ then the results were similar.
\end{enumerate}
For plotting the performance profile, we set the performance ratio $r_{p,s} = r_{M}$ when $p$ is not solved by $s$. Having $r_M = \infty$ is ideal, but setting it by a sufficiently large number does not make any difference. We used $r_M = 10^{20}$.

\vspace{0.1in}
\noindent\textit{\textbf{Black-box Adversarial Attacks.}}\quad
In this section, we explain the experiment setting for black-box adversarial attacks and also provide the hyperparameters that we used.
The CNN model we attack has two $5\times5$ convolutional layers with 6 and 16 output channels, followed by a $4\times 4$ convolutional layer with 120 output channels. Then two fully connected layers with 84 and 10 units follows. Between layers we use ReLU, and between convolutional layers we use $2\times 2$ max-pooling as well. Finally we apply log softmax to the output layer.
The test accuracy of the trained model is 98.99\%.

For this particular experiment, we make three modifications to CARS.
First, since the problem is highly non-convex ($h_r<0$ at around 50\% of the iteration), we do not compute $x_{\mathrm{CARS}}$ when $h_r < 0$ at $k$-th iteration.
The second modification is due to the constraint of the problem. Let $\mathcal{F} = \{x \in [0,1]^d : \|x-x_0\| \leq \varepsilon_{\mathrm{atk}} \}$ denote the feasible set.
Inspired by \cite{andriushchenko2020square}, we also compute $x_{\mathrm{bdry}} = x_k - t_{\mathrm{max}}d_r u_k$, where $t_{\mathrm{max}} = \max \{ t > 0 : x_k - td_r u_k \in \mathcal{F}\}$.
To sum up,
\begin{align*}
    x_{k+1} = 
    \begin{cases}
        \argmin\{f(x_k \pm r_k u_k), f(x_{\mathrm{CARS}}), f(x_{\mathrm{bdry}}) \} & \textrm{ if } h_r>0; \\
        \argmin\{f(x_k \pm r_k u_k), f(x_{\mathrm{bdry}}) \} & \text{ otherwise. }
    \end{cases}
\end{align*}
We use the same sampling distribution as Square Attack \cite{andriushchenko2020square}, which is known to be particularly well-suited for attacking CNN models. 
Lastly, we perturbed $x_0$ by adding horizontal stripes. This choice of initialization is found to be very effective in \cite{andriushchenko2020square}.
\end{document}